\newcommand\footnoteref[1]{\protected@xdef\@thefnmark{\ref{#1}}\@footnotemark}
\numberwithin{equation}{section}
\crefname{equation}{equation}{equations}
\crefname{section}{Section}{Sections}
\crefname{theorem}{Theorem}{Theorems}
\crefname{definition}{Definition}{Definitions}
\newtheorem{lemma}{Lemma}
\crefname{lemma}{Lemma}{Lemmas}
\newtheorem{proposition}{Proposition}
\crefname{proposition}{Proposition}{Propositions}
\newtheorem{definition}{Definition}
\crefname{corollary}{Corollary}{Corollaries}
\newtheorem{assumption}{Assumption}
\crefname{assumption}{Assumption}{Assumptions}
\crefname{algorithm}{Algorithm}{Algorithms}
\newtheorem{remark}{Remark}
\crefname{remark}{Remark}{Remark}
\newcommand{\G}{\mathcal{G}}
\newcommand{\M}{\mathcal{M}}
\newcommand{\N}{\mathcal{N}}
\newcommand{\cX}{\mathcal{X}}
\newcommand{\R}{\mathbb{R}}
\newcommand{\half}{\frac{1}{2}}
\newcommand{\X}{\mathbf{x}}
\newcommand{\Y}{\mathbf{y}}
\newcommand{\be}{\begin{equation}}
\newcommand{\ee}{\end{equation}}
\newcommand{\ba}{\begin{array}}
\newcommand{\ea}{\end{array}}
\newcommand{\bad}{\begin{aligned}}
\newcommand{\ead}{\end{aligned}}
\newcommand{\normtwo}[1]{\| #1 \|_2}
\newcommand{\normfro}[1]{\| #1 \|_{\text{F}}}
\newcommand{\normfroinf}[1]{\| #1 \|_{\text{F},\infty}}
\newcommand{\inp}[2]{\left\langle #1, #2 \right\rangle}
\newcommand{\argmin}{\mathop{\rm argmin}}
\newcommand{\argmax}{\mathop{\rm argmax}}
\newcommand{\p}{\mathcal{P}}
\newcommand{\ps}{\mathcal{P_{\St}}}
\newcommand{\dist}{\mathrm{dist}}
\newcommand{\distinfty}{ {d}_{2,\infty}}
\newcommand{\Retr}{\mathrm{Retr}}
\newcommand{\Tr}{\mathrm{Tr}}
\newcommand{\T}{\mathrm{T}}
\newcommand{\st}{\mathrm{s.t. }}
\newcommand{\St}{\mathrm{St}}
 \newcommand{\grad}{\mathrm{grad}}
 \newcommand{\Hess}{\mathrm{Hess}}
\newcommand{\Exp}{\mathrm{Exp}}
\newcommand{\blue}{\color{blue}}
\def\BibTeX{{\rm B\kern-.05em{\sc i\kern-.025em b}\kern-.08em
		T\kern-.1667em\lower.7ex\hbox{E}\kern-.125emX}}
\title{
On the Local Linear Rate of Consensus on the Stiefel Manifold  
}
\author{Shixiang Chen$^{1}$, Alfredo Garcia$^{1}$, Mingyi Hong$^{2}$  and Shahin Shahrampour$^{1}$ 
\thanks{$^{1}$The Wm Michael Barnes '64 Department of Industrial and Systems Engineering, Texas A\&M University, College Station, TX 77843. 
	Email addresses: {\tt\small sxchen@tamu.edu} (S. Chen), {\tt\small alfredo.garcia@tamu.edu } (A. Garcia), {\tt\small shahin@tamu.edu} (S.  Shahrampour).}%
\thanks{$^{2}$The Department of Electrical and Computer Engineering, University of Minnesota, Minneapolis, MN 55455.
    Email address: {\tt\small mhong@umn.edu} (M. Hong).}%
    }
\begin{document}
\maketitle

\begin{abstract}
We study the convergence properties of Riemannian gradient method for solving the consensus problem (for an undirected connected graph) over the Stiefel manifold. The Stiefel manifold is a non-convex set and the standard notion of averaging in the Euclidean space does not work for this problem. We propose Distributed Riemannian Consensus on Stiefel Manifold (DRCS) and prove that it enjoys a local linear convergence rate to   global consensus. More importantly, this local rate asymptotically scales with the second largest singular value of the communication matrix, which is on par with the well-known rate in the Euclidean space. To the best of our knowledge, this is the first work showing the equality of the two rates. The main technical challenges include (i) developing a Riemannian restricted secant inequality for convergence analysis, and (ii) to identify the conditions (e.g., suitable step-size and initialization) under which the algorithm always stays in the local region.


\end{abstract}
 \IEEEpeerreviewmaketitle
 
\section{Introduction}\label{sec:introduction}

Consensus and coordination has been a major topic of interest in the control community for the last three decades. The consensus problem in the Euclidean space is well-studied, but perhaps less well-known is consensus on the Stiefel manifold $\St(d,r):=\{x\in\R^{d\times r}: x^\top x = I_r\}$, which is a non-convex set. This problem has recently attracted significant attention \cite{markdahl2017almost,markdahl2020high,markdahl2018geometric} due to its applications to synchronization in planetary scale sensor networks\cite{paley2009stabilization}, modeling of collective motion in flocks\cite{al2018gradient}, synchronization of quantum bits\cite{lohe2010quantum}, and the Kuramoto models \cite{sarlette2009synchronization,markdahl2020high}. We refer the reader to \cite{markdahl2017almost,markdahl2020high} for more applications of this framework.
 
In general, the optimization problem of consensus on a Riemannian manifold $\M$ can be written as  
\be \label{opt:consensus_prev}
\bad
\min \phi(\X):=  \frac{1}{2}\sum_{i=1}^N\sum_{j=1}^N a_{ij} \mbox{\rm dist}^2(x_i,x_j)\\
\st \quad x_i\in\M,\ i= 1,\ldots,N,
\ead
\ee
where $\mbox{\rm dist}(\cdot,\cdot)$ is a distance function, $a_{ij}\geq 0$ is a constant associated with the underlying undirected, connected graph, and $\X^\top := ( x_1^\top \ x_2^\top \ \ldots \ x_N^\top )$. The consensus problem is also closely related to the center of mass problem on $\M$\cite{afsari2011riemannian}. To achieve consensus, one needs to solve the problem \eqref{opt:consensus_prev} to obtain a global optimal point. The Riemannian gradient method (RGM)\cite{Absil2009,boumal2019global} is a natural choice. When $\M=\St(d,r)$, which is embedded in the Euclidean space, it is more convenient to use the Euclidean distance for both computation and analysis purposes. For example, if the distance function in \eqref{opt:consensus_prev} is the geodesic distance, the Riemannian gradient of $\phi(\X)$  in \eqref{opt:consensus_prev} is the logarithm mapping, which does not have a closed-form solution on $\St(d,r)$ for $1<r<d$, and thus, iterative methods of computing Stiefel logarithm were proposed in \cite{rentmeesters2013algorithms,zimmermann2017matrix}. Moreover, the geodesic distance is not globally smooth.

In this paper, we discuss the convergence of RGM for solving the consensus problem on Stiefel manifold using the square Frobenius norm distance. This problem has been discussed in \cite{sarlette2009consensus, markdahl2020high}, which can be formulated as follows
	\be\label{opt:consensus_thispaper1}\tag{C-St} 
	\bad
	\min  \varphi^t(\X) :=\frac{1}{4} \sum_{i=1}^N\sum_{j=1}^N W^t_{ij} \normfro{x_i-x_j}^2 \\ \st \quad x_i\in\St(d,r),\    i= 1,\ldots,N,
	\ead \ee
where the superscript  $t\geq 1$ is an integer used to denote the $t$-th power of a doubly stochastic matrix $W$. Note that $t$ is introduced here to provide flexibility for our algorithm design and analysis, and computing $W^t_{ij}$ basically corresponds to performing $t$ steps of communication on the tangent space, on which we elaborate in \cref{alg:DRCS}.

It is well-known that for a generic smooth optimization problem over a Riemannian manifold, RGM globally converges to first-order critical points with a sub-linear rate \cite{Absil2009,boumal2019global}. In this  paper, we focus on applying RGM to  \eqref{opt:consensus_thispaper1}, and we call the resulting algorithm Distributed Riemannian Consensus on Stiefel Manifold (DRCS). We prove that for DRCS this sub-linear rate can be improved. In particular, we provide the first analysis showing that, a discrete-time retraction based RGM applied to problem \eqref{opt:consensus_thispaper1} converges  Q-linearly\footnote{A sequence $\{a_k\}$ is said to converge Q-linear to $a$ if there exists $\rho\in(0,1)$ and such that $\lim_{k\rightarrow \infty} \frac{|a_{k+1}-a|}{|a_k-a|} = \rho$. } in a local region  of the global optimal set. Furthermore, we show that the size of the local region and the linear rate are both dependent on  the connectivity of the graph capturing the network structure. Our main technical contributions are as follows: 
\begin{enumerate}
\item We develop and draw upon three second-order approximation properties \eqref{key}-\eqref{ineq:ret_second-order}-\eqref{property of projection onto tangent} in  \cref{lem:distance between Euclideanmean and IAM,lem:nonexpansive_bound_retraction,lem:relation between R and E}, which are crucial to link the Riemannian convergence analysis and Euclidean convergence analysis. 
\item We focus on identifying the suitable stepsize for DRCS, which can guarantee global convergence and local convergence. This is proved by showing a new descent lemma in \cref{lem:lipschitz}. 
\item We will show that a surrogate of local strong convexity holds for problem \eqref{opt:consensus_thispaper1}. It is called the 
\textit{Restricted Secant Inequality} (RSI), derived in \cref{lem:helpful lem to prove X in N_Qt}. In Euclidean space,
RSI  was proposed in \cite{zhang2013gradient} to study the convergence rate for gradient method. The benefit of RSI is that we do not need to take into account the second-order information, and that the linear rate can be proved easily like the Euclidean algorithms. \cref{lem:helpful lem to prove X in N_Qt} can be thought as a Riemannian version of the Euclidean RSI. 
\item Let $\cX^*$ denote the optimal solution set for the problem \eqref{opt:consensus_thispaper1}. It is easy to see that the following holds:
  \be\label{def:opt set} \cX^* : = \{ \X\in\St(d,r)^N: x_1=x_2= \ldots= x_N\}. \ee 
  After establishing the RSI, we prove the local Q-linear consensus rate of $\dist(\X_k,\cX^*)$ for DRCS, where $\dist(\X_k,\cX^*)$ is the Euclidean distance between $\X_k$ and the consensus set $\cX^*$. We show that the convergence rate asymptotically scales with the second largest singular value of $W$, which is the same as its counterpart in the Euclidean space. We characterize two local regions for such convergence in \cref{thm:linear_rate_consensus}, and for the larger region we require multi-step consensus.  
\end{enumerate}

 \subsection{Related Literature} 
  As the general Riemannian manifolds are nonlinear and the problem \eqref{opt:consensus_prev} is non-convex, the  consensus on manifold is considered a more difficult problem than that in the Euclidean space. The first-order critical points are not always in $\cX^*$.  The consensus on Riemannian manifold has been studied in several papers. We can broadly divide their approaches to \textit{intrinsic} or \textit{extrinsic}, which we will describe next.

 The intrinsic approach  means that it relies only on the intrinsic properties of the manifold, such
	as geodesic distances, exponential and logarithm maps, etc. For example, the discrete-time RGM for manifolds with bounded curvature is studied in \cite{tron2012riemannian}. \cite{bonnabel2013stochastic} also studies the stochastic RGM and applies it to solve the consensus problem on the manifold of symmetric positive definite matrix. The authors of \cite{bonnabel2013stochastic} show that using intrinsic approach outperforms the extrinsic method, i.e., the gossip algorithm\cite{boyd2006randomized}.

 The extrinsic approach  is based on specific
	embedding of the manifolds in Euclidean space.
	In \cite{sarlette2009consensus}, RGM is also studied for solving the consensus problem over the special orthogonal group $\mathrm{SO}(d)$ and the Grassmannian. However, it is only shown that RGM converges to the critical point. To achieve the global  consensus, a synchronization algorithm on the tangent space is presented in \cite[Section 7]{sarlette2009consensus}. But it requires communicating   an extra variable. 
 
 The main challenge of consensus on manifolds is that the optimization problem  is non-convex. Previous results show that the global consensus is graph dependent, e.g., the global  consensus is achievable  on equally weighted complete graph for $\mathrm{SO}(d)$ and Grassmannian \cite{sarlette2009consensus}. In \cite{tron2012riemannian}, it is also shown that any first-order critical point is the global optima for the tree graph on a manifold with bounded curvature. For general connected undirected graphs, the survey paper \cite{sepulchre2011consensus} summarizes three solutions to achieve almost global consensus on the circle (i.e., $d=2$ and $r=1$):  potential reshaping\cite{sarlette2009synchronization}, the gossip algorithm\cite{sarlette2008global} and dynamic consensus\cite{sarlette2009consensus}. However, such procedures could degrade the convergence speed. For example, the gossip algorithm could be arbitrarily slow and the dynamic consensus is only asymptotically convergent.

 When specific to the Stiefel manifold, most of the previous work for consensus on $\St(d,r)$ is on {\it local} convergence.  For example, the results of \cite{tron2012riemannian} show that, firstly,  any critical point in the region $\mathcal{S}:=\{ \X:\exists y\in\M\  \st \   \max_i d_g(x_i,y)< r^* \}$ is a global optimal point, where $d_g(\cdot,\cdot)$ is the geodesic distance and $r^*$ is an absolute constant with respect to the manifold. Also, the region  $\mathcal{S}$ is   convex\footnote{An open subset $s\subset \M $ is convex if it contains all shortest paths between any two points of $s$. }. 
  Secondly, RGM is  shown to achieve consensus locally.  Specifically, if the initial point $\X_0$ satisfies $\X_0\in \mathcal{S}_{\text{conv}}:=\{ \phi(\X)< \frac{(r^*)^2}{2dia(\G)} \}$, where $ dia(\G)$ is the diameter of the graph $\G$, then RGM  converges to global optimal point. However, the region $\mathcal{S}_{\text{conv}}$ is much smaller compared with $\mathcal{S}$ since   $\X\in\mathcal{S}_{\text{conv}}$ implies that $\sum_{j=1}^N a_{ij}d_g^2(x_i,x_j)\leq 2\phi(\X) \leq (r^*)^2/dia(\G)$. The difficulty of showing the consensus region to be $\mathcal{S}$ lies in preserving the iterates in $\mathcal{S}$. To theoretically guarantee this, the  sectional curvature of the manifold should be constant and non-negative, e.g., the sphere, or when the graph $\G$ has a linear structure.   
  
 Recently, the authors of  \cite{markdahl2017almost,markdahl2020high} show that one can achieve almost global consensus for problem \eqref{opt:consensus_thispaper1} whenever $r\leq \frac{2}{3}d-1$. More specifically, all second-order critical points are global optima, and thus, the measure of  stable manifold of saddle points is zero.  This can be proved by showing that the Riemannian Hessian at all saddle points has negative curvature, i.e., the strict saddle property in \cite{lee2017first} holds true.  Therefore, if we randomly initialize the RGM, it will almost always converge to the global optimal point \cite{lee2017first,markdahl2020high}.  Additionally, \cite{markdahl2020high} also conjectures that the strict saddle property holds for $d\geq 3$ and $r\leq d-2$. The scenarios $r=d-1$ and $r=d$ correspond to the multiply connected ($\St(d,d-1)\cong \mathrm{SO}(d)$)  and not connected case ($\St(d,d)\cong \mathrm{O}(d)$), respectively, which yields  multi-stable systems\cite{markdahl2019synchronization}.      

{However, none of the aforementioned work discusses the local linear rate of RGM on $\St(d,r)$ with $r>1$. }
One way  to prove the linear rate is to show that the Riemannian Hessian is positive definite  \cite{Absil2009} near a consensus point, but the Riemannian Hessian is degenerate at all consensus points (see \cref{sec:linear rate}). The linear rate of consensus can be established   by  reparameterization on the circle \cite{sarlette2009synchronization} or computing the generalized Lyapunov-type numbers on the sphere\cite{lageman2016consensus}, but it is not known how to generalize them to $r>1$. Thanks to the recent advancements in non-convex optimization\cite{lee2017first,ma2019implicit,Boumal-phase-synchronization-2016} and optimization over Stiefel manifold \cite{edelman1998geometry,Abrudan2008,Absil2009,boumal2019global,Liu-So-Wu-2018,chen2020proximal,li2019nonsmooth}, we  study the local landscape of \eqref{opt:consensus_thispaper1} by an extrinsic approach and tackle the problem using a Riemannian-type RSI.

\section{Preliminaries}

\subsection{Outline of the Paper and Notation}
The rest of the  paper is organized as follows. \cref{sec:alg} describes the algorithm and challenges. \cref{sec:consensus} presents the global convergence results. \cref{sec:linear rate} develops the Riemannian RSI and the local linear rate. \cref{sec:numerical} demonstrates the numerical experiments. \cref{sec:appendix}  provides the proofs of all technical results. 

Starting from this section, we use $\M =\St(d,r) $ for brevity. We also have the following notations: 
\begin{itemize}
	\item  $\G=(\mathcal{V},\mathcal{E})$: the undirected graph with $|\mathcal{V}|=N$ nodes.
	\item $A=[a_{ij}]$: the adjacency matrix of graph $\G$.
	\item   $\X$: the collection of all local variables $x_i$ by stacking them, i.e., $\X^\top = ( x_1^\top \ x_2^\top \ \ldots \ x_N^\top )$. 
	\item $\M^N = \M\times\ldots\times\M$:  the $N-$fold Cartesian product.
	\item  $[N]: = \{1,2,\ldots,N\}$. For $\X\in({\R^{d\times r}})^N$,  the $i$-th block of $\X$: $[\X]_i=x_i$. 
	\item $\nabla \varphi^t(\X)$: Euclidean gradient;  $\nabla \varphi^t_i(\X): = [\nabla \varphi^t(\X)]_i$: the $i$-th block of  $\nabla \varphi^t(\X)$.
		\item $\T_x\M$: the tangent space of $\St(d,r)$ at point $x$.
	\item $N_x\M$: the normal space of $\St(d,r)$ at point $x$.
	\item  $\Tr(\cdot)$: the trace;  $\inp{x}{y}=\Tr(x^\top y)$ : the inner product on $\T_x\M$ is induced from the Euclidean inner product.
	\item $\grad \varphi^t(\X)$: Riemannian gradient;  $\grad \varphi_i^t(\X):= [\grad \varphi^t(\X)]_i$:  the i-th block of  $\grad \varphi^t(\X)$.
	\item  $\normfro{\cdot}$: the Frobenius norm;  $\normtwo{\cdot}$: the operator norm.
	\item $\p_{C}$:  the orthogonal projection onto a closed set $C$.
	\item  $I_r$: the $r\times r$ identity matrix.
	\item $\textbf{1}_N\in\R^N$: the vector of all ones; $J := \frac{1}{N} \textbf{1}_N\textbf{1}_N^\top $.
\end{itemize}

\begin{definition}[Consensus]\label{def:consensus}
	Consensus is the configuration where $x_i = x_j\in\M$ for all $i,j\in[N]$. 
\end{definition}

\subsection{Network Setting}
To represent the network, we use a graph $\G$ that satisfies the following assumption.
\begin{assumption}\label{assump:doubly-stochastic}
	We assume that the undirected graph $\G$ is connected and the corresponding communication matrix $W$ is doubly stochastic, i.e.,
	\begin{itemize}
		\item $W=W^\top$.
		\item $W_{ij}\geq 0$ and $1>W_{ii}>0$.
		\item Eigenvalues of $W$ lie in $(-1,1]$. The second largest singular value $\sigma_2$ of $W$ lies in $[0,1)$.
	\end{itemize}
\end{assumption}
It is easy to see that any power of the matrix $W$ is also doubly stochastic and symmetric. Moreover, the second largest singular value of $W^t$ is $\sigma_2^t$. 
\subsection{Optimality Condition }
We first introduce some preliminaries about optimization on a Riemannian manifold. Let us   consider the following   optimization problem over a matrix manifold $\M$ \be\label{prob:central} \min f(x) \quad \st \quad  x\in\M. \ee
The Riemannian gradient $\grad f(x)$ is defined by the unique tangent vector  satisfying $\inp{\grad f(x)}{\xi} = Df(x)[\xi]$ for all $\xi\in\T_x\M$, where $D$ means the differential of $f$ and  $Df(x)[\xi]$  means the directional derivative along $\xi$.  Since we use the metric on the tangent space $\T_x\M$ induced from the Euclidean inner product $\inp{\cdot}{\cdot}$, the Riemannian gradient $\grad f(x)$ on $\St(d,r)$ is given by $\grad f(x) = \p_{\T_x\M}(\nabla f(x))$, where $\p_{\T_x\M}$ is the orthogonal projection onto $\T_x\M$. More specifically, we have \[ \p_{\T_x\M}(y) = y - \frac{1}{2} x(x^\top y + y^\top x),\] for any $y\in\R^{d\times r}$ (see \cite{edelman1998geometry,Absil2009}), and \[\p_{N_{x}\M}(y) = \frac{1}{2} x(x^\top y + y^\top x).\]  Under the Euclidean metric, the Riemannian Hessian denoted by $\Hess f(x)$ is given by $\Hess f(x)[\xi] = \p_{\T_x\M}( D(x\mapsto \p_{\T_x\M}\nabla f(x)) [\xi] ) $ for any $\xi\in\T_x\M$, i.e., the projection   differential of the  Riemannian gradient\cite{Absil2009,boumal2019global}.  We refer to \cite{absil2013extrinsic} for how to compute $\p_{\T_x\M}( D(x\mapsto \p_{\T_x\M}\nabla f(x)) [\xi] )$ on $\St(d,r)$. The  necessary optimality condition of problem \eqref{prob:central} is given as follows.  
\begin{proposition}(\cite{Yang-manifold-optimality-2014,boumal2019global})\label{prop:optcond}
	Let $x \in \M $ be a local optimum for \eqref{prob:central}. If $f$
	is differentiable at $x$, then $\grad f(x) = 0$. Furthermore, if f is twice differentiable at $x$, then $\Hess   f(x) \succcurlyeq 0$.
\end{proposition}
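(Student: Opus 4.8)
The plan is to reduce the Riemannian statement to an elementary one-dimensional optimality condition by pulling $f$ back to a scalar function along curves through $x$. First I would fix an arbitrary tangent vector $\xi \in \T_x\M$ and a retraction $\Retr$, and define the scalar function $g(t) := f(\Retr_x(t\xi))$, which is smooth near $t=0$ with $g(0) = f(x)$. Since $\Retr_x(0) = x$ and $x$ is a local optimum of \eqref{prob:central}, the point $t=0$ is a local minimizer of $g$ on the real line, so the classical scalar conditions $g'(0) = 0$ and (when $f$ is twice differentiable) $g''(0) \ge 0$ must hold.

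For the first-order part, the chain rule together with the defining property of the retraction, $\frac{d}{dt}\big|_{t=0}\Retr_x(t\xi) = \xi$, gives $g'(0) = Df(x)[\xi] = \inp{\grad f(x)}{\xi}$. Because $g'(0)=0$ for every $\xi \in \T_x\M$ and $\grad f(x)$ itself lies in $\T_x\M$, choosing $\xi = \grad f(x)$ forces $\normfro{\grad f(x)}^2 = 0$, hence $\grad f(x)=0$.

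For the second-order part, I would differentiate once more. The key identity is that at a critical point $g''(0) = \inp{\Hess f(x)[\xi]}{\xi}$ for any retraction, and this is exactly where the assumption $\grad f(x)=0$ established above is needed. Concretely, differentiating $g'(t) = \inp{\nabla f(\Retr_x(t\xi))}{\frac{d}{dt}\Retr_x(t\xi)}$ produces one term involving the directional derivative of $\nabla f$ and one term pairing $\nabla f(x)$ against the acceleration $\frac{d^2}{dt^2}\big|_{t=0}\Retr_x(t\xi)$; since $\grad f(x) = \p_{\T_x\M}(\nabla f(x)) = 0$, the ambient gradient $\nabla f(x)$ lies in the normal space, so the retraction-dependent acceleration term is annihilated and only the piece matching the projection-differential definition of $\Hess f(x)$ given in the excerpt survives. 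Combining with $g''(0)\ge 0$ yields $\inp{\Hess f(x)[\xi]}{\xi}\ge 0$ for all $\xi \in \T_x\M$, i.e.\ $\Hess f(x)\succcurlyeq 0$.

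The main obstacle is precisely this second-order step: one must verify that the retraction-dependent acceleration term genuinely drops out at a critical point and that the remaining quadratic form coincides with the projection-based Hessian defined earlier rather than with some other curvature correction. The cleanest way to sidestep retraction-dependence is to take $\Retr$ to be the exponential map, so that $t\mapsto \Retr_x(t\xi)$ is a geodesic with zero covariant acceleration; then $g''(0)$ equals the Hessian quadratic form directly through the Levi-Civita connection, and one invokes the standard fact that for submanifolds embedded in $\R^{d\times r}$ the projection-differential formula $\p_{\T_x\M}(D(x\mapsto \p_{\T_x\M}\nabla f(x))[\xi])$ agrees with the covariant derivative of the Riemannian gradient.
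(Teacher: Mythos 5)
Your proof is correct; note that the paper itself does not prove this proposition but simply cites it from \cite{Yang-manifold-optimality-2014,boumal2019global}, and your curve-pullback argument ($g(t)=f(\Retr_x(t\xi))$, then $g'(0)=0$ and $g''(0)\ge 0$) is exactly the standard argument those references use. You also correctly flag the one genuine subtlety --- that $\langle \nabla f(x), c''(0)\rangle$ is not literally zero at a critical point but is retraction-independent (only the normal component of the acceleration matters, and it equals the second fundamental form term absorbed into the projection-differential Hessian) --- and your fallback to geodesics, where the covariant acceleration vanishes and $g''(0)=\langle \Hess f(x)[\xi],\xi\rangle$ directly, closes that gap rigorously.
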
 
A point $x$ is a first-order critical point (or critical point) if $\grad f(x)=0$. $x$ is called a second-order critical point if $\grad f(x)=0$ and  $\Hess   f(x) \succcurlyeq 0$.\\
 The concept of a retraction \cite{Absil2009}, which is a first-order approximation of the exponential mapping and can be more amenable to computation, is given as follows.
\begin{definition}\label{def_retraction}\cite[Definition 4.1.1]{Absil2009}
A retraction on a differentiable manifold $\mathcal{M}$ is a smooth mapping $\Retr$ from the tangent bundle $\T\mathcal{M}$ onto $\mathcal{M}$ satisfying the following two conditions (here $\Retr_x$ denotes the restriction of $\Retr$ onto $\T_x \mathcal{M}$):
\begin{enumerate}
\item $\Retr_x(0)=x, \forall x\in\M$, where $0$ denotes the zero element of $\T_x\mathcal{M}$.
\item For any $x\in\M$, it holds that
    \[\lim_{\T_x\M\ni\xi\rightarrow 0}\frac{\|\Retr_x(\xi)-(x+\xi)\|_F}{\|\xi\|_F} = 0.\]
\end{enumerate}
\end{definition}
\section{The Proposed Algorithm}\label{sec:alg}
 
The discrete-time RGM applied to solve problem \eqref{opt:consensus_thispaper1}  is described in \cref{alg:DRCS}. We name it as Distributed Riemannian Consensus on Stiefel manifold (DRCS). The goal of this paper is to study the local (Q-linear) rate of  DRCS for solving problem \eqref{opt:consensus_thispaper1}.
\begin{algorithm}[ht]
	\caption{Distributed Riemannian Consensus on Stiefel manifold (DRCS)}\label{alg:DRCS}
	\begin{algorithmic}[1]
		\State{Input: random initial point $\X_0 \in \St(d,r)^N$, stepsize $0<\alpha < 2/L_t $ and an integer $t\geq  1$. }  
		\For{$k=0,1,\ldots$}\Comment{For each node $i\in[N]$, in parallel  }
		\State Compute $\nabla \varphi_i^1(\X_k) = x_{i,k} - \sum_{j=1}^N W_{ij}x_{j,k}$.
		\For{$l=2,\ldots,t$}\Comment{	Multi-step consensus}
		\State{ $\nabla \varphi_i^{l}(\X_k) = {\blue \nabla \varphi_i^1(\X_k)+ } \sum_{j=1}^N W_{ij}\nabla \varphi_j^{l-1}(\X_k) $ }
		\EndFor
		\State{Update \be\label{consensus_rga}x_{i,k+1} = \Retr_{x_{i,k}}\left( -\alpha \p_{\T_{x_{i,k}}\M} \left(\nabla \varphi_i^{t}(\X_k) \right) \right)\ee}
		\EndFor
	\end{algorithmic}
\end{algorithm}

We remark that the DRCS algorithm is similar in spirit to the Riemannian consensus algorithm in \cite{tron2012riemannian}, but we use   retraction instead of the exponential map. In \cite{tron2012riemannian}, geodesic distance is used in \eqref{opt:consensus_prev} for Grassmannian manifold and special orthogonal group and only a sub-linear rate was shown (using one-step communication). Given some integer $t\geq 1$, the iteration \eqref{consensus_rga} in Algorithm \ref{alg:DRCS} is the Riemannian gradient descent step,
where  $\alpha$ is the stepsize. The algorithm  updates along a negative Riemannian gradient direction on the tangent space, then performs the {retraction} operation $ \Retr_{\X_k}$ to guarantee feasibility.

Also notice that $\normfro{x}^2=r$ holds true for any $x\in\St(d,r)$, so \eqref{opt:consensus_thispaper1} is equivalent to 
\be\label{opt:consensus_thispaper}
\bad
\max  h^t(\X):= \frac{1}{2}\sum_{i=1}^N\sum_{j=1}^N  W^t_{ij}\inp{x_i}{x_j}\\   \st \quad x_i\in\St(d,r),\  \forall i\in[N]. 
\ead  
\ee 
DRCS can also be  seen as applying Riemannian gradient ascent to solve \eqref{opt:consensus_thispaper}. That is, \eqref{consensus_rga} is equivalent to  \be\label{consensus_rga-1}x_{i,k+1} = \Retr_{x_{i,k}}\left( \alpha\p_{\T_{x_{i}}\M} (\sum_{j=1}^N  W_{ij}^t x_{j,k}) \right).\ee    The term $\p_{\T_{x_{i}}\M} (\sum_{j=1}^N W_{ij}^t x_{j,k}) $ can be   viewed as performing $t$ steps of Euclidean consensus  on the tangent space $\T_{x_{i,k}}\M$. 

Although multi-step consensus requires more communications at each iteration, it reduces the outer loop iteration number since $\sigma_2^t$ scales  better than $\sigma_2$. 
For a large $t$, the corresponding graph of $W^t\approx\frac{1}{N}\mathbf{1}_N\mathbf{1}_N^\top$ is approximately the complete graph.  
We emphasize here that  multi-step consensus does not make the convergence analysis trivial, since we do not require $t$ to be too large.   For the Euclidean case, \cite{berahas2018balancing} also discusses the   advantages  of multi-step consensus for decentralized gradient method.

\subsection{Consensus in Euclidean Space: A Revisit}\label{sec:euclidean-consensus}
Let us briefly review the consensus with convex constraint in the Euclidean space (C-E)\cite{Tsitsiklisthesis}, which will give us some insights to study the convergence rate of DRCS. The optimization problem can be written as follows
	\be\label{opt:Euclidean consensus}\tag{C-E} 
\bad
\min  \varphi^t(\X) :=\frac{1}{4} \sum_{i=1}^N\sum_{j=1}^N W^t_{ij} \normfro{x_i-x_j}^2 \\ \st \quad x_i\in \mathcal{C},\    i= 1,\ldots,N,
\ead \ee
where $\mathcal{C}$ is a closed convex set in the Euclidean space. Then, the iteration is given by \cite{nedic2010constrained}
\[  x_{i,k+1} = \p_{\mathcal{C}} \left( \sum_{j=1}^N W_{ij} x_{i,k} \right)  \quad \forall i \in[N],  \]
with the corresponding matrix form being as follows  
\be\label{Euclidean consensus}  \X_{k+1} =\p_{\mathcal{C}^N} \left( (W\otimes I_d)  \X_k \right), \tag{EuC} \ee
where $\mathcal{C}^N=\mathcal{C}\times\cdots\times \mathcal{C}.$
Different forms of \eqref{Euclidean consensus} are discussed in \cite{nedic2018network}. 
Let us denote the Euclidean mean via 
\be\label{def:Euclidean mean }\hat x: = \frac{1}{N}\sum_{i=1}^{N} x_i \ \text{and} \  \hat \X := \mathbf{1}_N \otimes\hat x .\ee
We have 
	\be\label{linear rate Euclidean}
	\begin{aligned}
		\normfro{\X_k - \hat\X_k} 
		&\leq\normfro{\X_k - \hat\X_{k-1}} \\
		&= \normfro{ \p_{\mathcal{C}^N} \left( (W\otimes I_d )  \X_{k-1} \right) - \hat\X_{k-1}} \\
		& \leq  \normfro{ [(W  - J )\otimes I_d]  ( \X_{k-1} - \hat\X_{k-1})}\\
		&\leq \sigma_2  \normfro{\X_{k-1} - \hat\X_{k-1}},
	\end{aligned}\ee
where the second inequality follows from the non-expansiveness of $\p_{\mathcal{C}}$. 
	Therefore, the Q-linear rate of \eqref{Euclidean consensus} is equal to $\sigma_2$.
	On the other hand, the iteration \eqref{Euclidean consensus} is the same as applying projected gradient descent (PGD) method to solve the problem \eqref{opt:Euclidean consensus}.  
	That is, we have
	\be\label{e}
	\X_{k+1} =\p_{\mathcal{C}^N} \left( (W\otimes I_d)  \X_k \right) =  \p_{\mathcal{C}^N} \left( \X_k -   \alpha_{e}\nabla \varphi(\X_k) \right),
	\ee
	with stepsize $\alpha_{e}= 1$. 
	Let us take a look at how to show the linear rate of PGD using standard convex optimization analysis.  We have the Euclidean gradient $\nabla \varphi(\X) = \X - (W\otimes I_d) \X$. Though the hessian matrix $\nabla^2 \varphi(\X)=(I_N - W)\otimes I_d$ is degenerated, it is positive definite when restricted to the subspace  $(\R^{d\times r})^N   \setminus \mathcal{E}^*$, where $\mathcal{E}^* : = \mathbf{1}_{N} \otimes \R^{d\times r}  $ is the optimal set of CE problem. Simply speaking, $I_N-W$ is positive definite in $\R^{N}\setminus \mathrm{span}(\mathbf{1}_N)$.  Note that $\hat\X = \p_{ \mathcal{E}^* } \X$, so $\X - \hat \X$ is orthogonal to $\mathcal{E}^* $.  
	Following the proof of linear rate for strongly convex functions\cite[Theorem 2.1.15]{nesterov2013introductory}, one needs the inequality in \cite[Theorem 2.1.12]{nesterov2013introductory}, specialized to our problem as follows 
	\be\label{restricted strong convexity}
	\bad
	& \quad \inp{\X - \hat\X}{\nabla \varphi (\X)}\\
	& =\inp{\X - \hat\X}{(I_N - W)\otimes I_d (\X - \hat\X)} \\
	&\geq \frac{\mu L}{\mu + L}\normfro{\X - \hat\X}^2 + \frac{1}{\mu + L} \normfro{\nabla \varphi(\X)}^2.
		\ead \ee
	The constants are given by \[ \mu := 1-\lambda_{2}(W)\quad \text{and} \quad L : = 1-\lambda_{N}(W), \] where $\lambda_{2}(W)$ is the second largest eigenvalue of $W$, and $\lambda_{N}(W)$ is the smallest eigenvalue of $W$, respectively. This inequality can be   obtained using the eigenvalue decomposition of $I_N-W$. We provide the proof in the Appendix, and we call \eqref{restricted strong convexity}  ``restricted secant inequality''. 
	With this, if $\alpha_{e} = \frac{2}{\mu + L }$, we get 
	\[ \normfro{\X_k - \hat\X_k} \leq (\frac{L  - \mu}{L + \mu})^k \normfro{\X_0 - \hat \X_0}. \]
	It can be shown by simple calculations that $\frac{L- \mu }{L+\mu}\leq \sigma_2$. This suggests that the  PGD can achieve faster convergence rate with $\alpha_{e} = \frac{2}{\mu + L }$. When $\alpha_{e} = 1$, the rate of $\sigma_2$ can be shown via combining  \eqref{restricted strong convexity} with $L\normfro{\X - \hat \X} \geq \normfro{\nabla \varphi(\X)} \geq \mu \normfro{\X - \hat \X}$. The proof is provided in the Appendix. 

\subsection{Consensus on Stiefel Manifold: Challenges and Insights }\label{sec:consensus on stiefel}
As we see, different from the \eqref{Euclidean consensus} iteration with convex constraint\cite{nedic2010constrained}, in DRCS the projection onto convex set is replaced with a retraction operator, and the Euclidean gradient is substituted by the Riemannian gradient. The standard results \cite{Absil2009,boumal2019global} on RGM already show global sub-linear rate of DRCS. However, to obtain the {\it local Q-linear} rate, we need to exploit the specific problem structure.  To analyze DRCS, there are two main challenges. 

   First, due to the non-linearity of $\St(d,r)$, the Euclidean mean $\hat x$ in \eqref{def:Euclidean mean } is infeasible. We need to use the average point defined on the manifold. The second challenge comes from the non-convexity of $\St(d,r)$. Previous work  such as \cite{nedic2010constrained} usually discusses  the convex constraint in the Euclidean space, which depends on the non-expansive property of the projection operator onto convex constraint. 
   
   To solve these issues. We use the so-called induced arithmetic mean (IAM)   \cite{sarlette2009consensus}   of $x_1,\ldots,x_N$ over $\St(d,r)$, defined by
\begin{align} 
\bar{x}& := \argmin_{y\in\St(d,r)} \sum_{i=1}^N\normfro{y-x_{i}}^2\notag\\
&=\argmax_{y\in\St(d,r)} \langle y,\sum_{i=1}^N x_{i}\rangle  = \ps(\hat x),\label{eq:IAM}\tag{IAM}
\end{align}
where $\p_{\St}(\cdot)$ is the orthogonal projection onto $\St(d,r)$. Different from the Euclidean mean notation, we define
\be\label{def:IAM}
\bar{x}_k= \ps(\hat x_k)  \quad \text{and}\quad \bar\X_k = \mathbf{1}_N\otimes \bar x_k 
\ee
to denote IAM of $x_{1,k},\ldots,x_{N,k}$.
The IAM  is the orthogonal projection of the Euclidean mean onto $\St(d,r)$, and $\bar\X$ is also the projection of $\X$ onto the optimal set $\cX^*$ defined in \eqref{def:opt set}. The distance between $\X$ and $\cX^*$  is given by
\[\dist^2(\X,\cX^*) =  \min_{y\in\St(d,r)} \frac{1}{N} \sum_{i=1}^N\normfro{y-x_{i}}^2 = \frac{1}{N}\normfro{\X-\bar\X}^2.  \]
 The terminology IAM is derived from \cite{moakher2002means}, where the IAM on $\mathrm{SO}(3)$ is called the projected arithmetic mean. The IAM is different from  the Fr\'{e}chet mean \cite{afsari2011riemannian,afsari2013convergence,tron2012riemannian} (or the Karcher mean\cite{grove1973conjugatec,karcher1977riemannian}). 
We use IAM since it is easier to adopt to the Euclidean linear structure and computationally convenient.   
Furthermore, we define the $l_{F,\infty}$ distance between $\X_k$ and $\bar \X_k$ as 
\be\label{distance_infty}
\normfroinf{\X-\bar\X} = \max_{i\in[N]} \normfro{x_i-\bar{x}}. \tag{$l_{F,\infty}$}
\ee

Let us first build the connection between the Euclidean mean and IAM in the following lemma. 
\begin{lemma}\label{lem:distance between Euclideanmean and IAM}
	For any $\X\in\St(d,r)^N$, let $\hat x  =\frac{1}{N}\sum_{i=1}^N x_i$ be the Euclidean mean and denote $\hat\X = \mathbf{1}_N\otimes \hat x$ defined in \eqref{def:Euclidean mean }. Similarly, let $\bar\X = \mathbf{1}_N\otimes \bar x$, where $\bar{x}$ is the IAM defined in \eqref{eq:IAM}.  We have
	\be\label{ineq:relation_eum_manifmean} \frac{1}{2}\normfro{\X- \bar\X}^2\leq \normfro{\X - \hat \X}^2\leq \normfro{\X- \bar\X}^2.  \ee
	Moreover, if $\normfro{\X - \bar\X}^2\leq N/2$, one has  
	\begin{equation}\label{key}
	\normfro{\bar x - \hat x }\leq \frac{2\sqrt{r}\normfro{\X-\bar\X}^2}{N}, \tag{P1}
	\end{equation}
	and 
	\be\label{ineq:relation_eum_manifmean_1}
	\normfro{\X - \hat\X}^2 \geq \normfro{\X-\bar\X}^2  -  \frac{4r \normfro{ \X-\bar\X}^4}{ {N}}.
	\ee
\end{lemma}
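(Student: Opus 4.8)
The plan is to reduce everything to the singular values of the Euclidean mean $\hat x$. First I would record the orthogonal (Pythagorean) decomposition
\be\label{prop:pyth}
\normfro{\X - \bar\X}^2 = \normfro{\X-\hat\X}^2 + N\normfro{\hat x - \bar x}^2 , \notag
\ee
which follows by expanding $\sum_i \normfro{x_i - \bar x}^2 = \sum_i \normfro{(x_i - \hat x)+(\hat x - \bar x)}^2$ and using $\sum_i (x_i - \hat x)=0$ to annihilate the cross term. This identity already yields the right half of \eqref{ineq:relation_eum_manifmean}, and it reduces both remaining claims to controlling the single quantity $N\normfro{\hat x - \bar x}^2$.

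Next I would bring in the singular values of $\hat x$. Since each $x_i^\top x_i = I_r$, we have $\normfro{x_i}^2 = r$ and $\normtwo{x_i}=1$; hence $\hat x$ has operator norm at most one, and writing its thin SVD $\hat x = \U\Sigma\V^\top$ with $\Sigma = \mathrm{diag}(\sigma_1,\dots,\sigma_r)$, all $\sigma_j \in [0,1]$. A direct expansion gives $\normfro{\X-\hat\X}^2 = N\big(r - \normfro{\hat x}^2\big) = N\sum_{j}(1-\sigma_j^2)$. Since the orthogonal projection of $\hat x$ onto $\St(d,r)$ is the polar factor $\bar x = \U\V^\top$, I also get $\normfro{\hat x - \bar x}^2 = \normfro{\Sigma - I_r}^2 = \sum_{j}(1-\sigma_j)^2$.

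With these two formulas the scalar inequalities do the rest. For the left half of \eqref{ineq:relation_eum_manifmean} I would use $0 \le \sigma_j \le 1$ to write $(1-\sigma_j)^2 \le (1-\sigma_j)(1+\sigma_j) = 1-\sigma_j^2$, so $N\normfro{\hat x-\bar x}^2 \le \normfro{\X-\hat\X}^2$; combined with the identity this gives $\normfro{\X-\bar\X}^2 \le 2\normfro{\X-\hat\X}^2$. For \eqref{key}, the smallness hypothesis is exactly what makes $\hat x$ full rank: from $\normfro{\X-\hat\X}^2 \le \normfro{\X-\bar\X}^2 \le N/2$ we get $\sum_j(1-\sigma_j^2)\le \tfrac12$, forcing each $\sigma_j \ge 1/\sqrt 2 > 0$, so the polar-factor characterization of $\ps(\hat x)$ is valid. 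Then using $1-\sigma_j = (1-\sigma_j^2)/(1+\sigma_j) \le 1-\sigma_j^2$ together with the bound of a rank-$r$ matrix's Frobenius norm by $\sqrt r$ times its operator norm,
\be
\normfro{\hat x - \bar x} \le \sqrt r\,\max_j (1-\sigma_j) \le \sqrt r\sum_{j}(1-\sigma_j^2) = \frac{\sqrt r}{N}\normfro{\X-\hat\X}^2 \le \frac{\sqrt r}{N}\normfro{\X-\bar\X}^2 , \notag
\ee
which establishes \eqref{key} (with a slightly sharper constant than $2\sqrt r$). Finally, \eqref{ineq:relation_eum_manifmean_1} follows by rearranging the identity as $\normfro{\X-\hat\X}^2 = \normfro{\X-\bar\X}^2 - N\normfro{\hat x - \bar x}^2$ and substituting the squared form of \eqref{key}, namely $N\normfro{\hat x-\bar x}^2 \le \tfrac{4r}{N}\normfro{\X-\bar\X}^4$.

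The only genuinely delicate point is the well-definedness of $\bar x$: the clean SVD identities for $\normfro{\hat x - \bar x}$ require $\hat x$ to have full column rank, and I expect the main work to lie in confirming that the hypothesis $\normfro{\X-\bar\X}^2\le N/2$ bounds the singular values away from zero (as above), so that $\bar x = \U\V^\top$ is the unique projection. Everything else is the Pythagorean identity plus the elementary inequalities $(1-\sigma)^2 \le 1-\sigma^2$ on $[0,1]$ and the $\sqrt r$ operator-to-Frobenius norm comparison for a rank-$r$ matrix.
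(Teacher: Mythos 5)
Your proof is correct, and it rests on the same underlying objects as the paper's: the thin SVD of $\hat x$, the polar-factor characterization $\bar x = \U\V^\top$ of the IAM, and the identities $\normfro{\X-\hat\X}^2 = N\sum_j(1-\sigma_j^2)$ and $\normfro{\hat x-\bar x}^2=\sum_j(1-\sigma_j)^2$. The organization differs in two small but pleasant ways. First, you put the Pythagorean identity $\normfro{\X-\bar\X}^2 = \normfro{\X-\hat\X}^2 + N\normfro{\hat x-\bar x}^2$ up front and derive the whole double inequality \eqref{ineq:relation_eum_manifmean} from it via the single scalar fact $(1-\sigma)^2\le 1-\sigma^2$ on $[0,1]$; the paper only invokes that identity at the very end (for \eqref{ineq:relation_eum_manifmean_1}) and instead proves \eqref{ineq:relation_eum_manifmean} by comparing the trace norm against the Frobenius norm ($\normfro{\hat x}^2\le \|\hat x\|_*\le\sqrt r\,\normfro{\hat x}$), which amounts to the same singular-value inequalities. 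Second, and more substantively, your route to \eqref{key} bounds $\max_j(1-\sigma_j)\le\sum_j(1-\sigma_j)\le\sum_j(1-\sigma_j^2)=\tfrac1N\normfro{\X-\hat\X}^2$ directly, whereas the paper first lower-bounds the smallest singular value by squaring $\sqrt r\,\normfro{\hat x}\ge\normfro{\hat x}^2$ and then uses $\sqrt{1-s}\ge 1-s$; your version yields the sharper constant $\sqrt r$ in place of $2\sqrt r$ and, as you can check, does not actually need the hypothesis $\normfro{\X-\bar\X}^2\le N/2$ for the estimate itself (that hypothesis is only needed, as you note, to keep $\hat x$ full rank so the projection $\ps(\hat x)$ is single-valued). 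Since \eqref{key} is quoted downstream with the constant $2\sqrt r$, your sharper bound is of course still compatible with everything that follows.
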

The inequality \eqref{ineq:relation_eum_manifmean} is tight, since we have $\frac{1}{2}\normfro{\X- \bar\X}^2= \normfro{\X - \hat \X}^2=Nr$ when $\sum_{i=1}^N x_i = 0$ and $\normfro{\X - \hat \X}^2= \normfro{\X- \bar\X}^2$ when $x_1=x_2=\ldots=x_N$. 
  The inequality \eqref{key} suggests that the Euclidean mean will converge to IAM quadratically if $\X$ is close to $\bar \X$.  

 To deal with the non-convexity of $\St(d,r$),  we use the nice properties for second-order retraction. 
The following second-order property of retraction in \cref{lem:nonexpansive_bound_retraction} is crucial to link the optimization methods between Euclidean space  and  the matrix manifold.  It means that $\Retr_{x}(\xi) = x+\xi + \mathcal{O}(\normfro{\xi}^2)$, that is, $\Retr_{x}(\xi)$ is locally a good approximation to $x+\xi$. This property has been used to analyze many algorithms (see e.g., \cite{boumal2019global, chen2020proximal, li2019nonsmooth}).  In this paper, we only use the polar decomposition based retraction to present a simple proof. The polar decomposition is given by 
\begin{align}\label{eq:polar}
    \Retr_{x}(\xi) = (x+\xi)(I_r + \xi^\top \xi)^{-1/2},
\end{align}
which is also the orthogonal projection of $x+\xi$ onto $\St(d,r)$.   The following property \eqref{ineq:ret_nonexpansive} also holds for the polar   retraction, which can be seen as a non-expansiveness property. 
\begin{lemma} \cite{boumal2019global,Liu-So-Wu-2018}\label{lem:nonexpansive_bound_retraction}
	Let $\Retr$ be a {second-order} retraction over $\St(d,r)$. We then have
	\be\label{ineq:ret_second-order} \bad
	\normfro{ \Retr_x(\xi) - (x+\xi)}\leq M\normfro{\xi}^2, \\
	\forall x\in\St(d,r), \quad \forall \xi\in\T_x\M.  
	\ead \tag{P2}
	\ee
	Moreover, if the retraction is the polar retraction, then  
	for all $\X\in\St(d,r)$ and
	$\xi\in\T_x\M$, the following  inequality  holds for any $y\in \St(d,r)$ \cite[Lemma 1]{li2019nonsmooth}:
	\be\label{ineq:ret_nonexpansive}
	\normfro{\Retr_{x}(\xi)-y}\leq \normfro{x+\xi-y} .
	\ee
\end{lemma}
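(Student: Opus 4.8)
The plan is to reduce both claims to the polar factorization of $x+\xi$, exploiting the single structural fact that makes everything work: since $\xi\in\T_x\M$, the tangency condition $x^\top\xi+\xi^\top x=0$ gives
\be
(x+\xi)^\top(x+\xi)=x^\top x+x^\top\xi+\xi^\top x+\xi^\top\xi=I_r+\xi^\top\xi.
\ee
Writing $P:=(I_r+\xi^\top\xi)^{1/2}\succeq I_r$ and $Q:=\Retr_x(\xi)=(x+\xi)P^{-1}$, we then have the polar decomposition $x+\xi=QP$ with $Q^\top Q=I_r$; equivalently, if $x+\xi=U\Sigma V^\top$ is a thin SVD, then $Q=UV^\top$ and $P=V\Sigma V^\top$. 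The crucial consequence is that every singular value $\sigma_i$ of $x+\xi$ satisfies $\sigma_i\geq 1$, because the eigenvalues of $P^2=I_r+\xi^\top\xi$ are all at least $1$. I would establish this identity first, since both bounds hinge on it.

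For the second-order property \eqref{ineq:ret_second-order}, I would write $\Retr_x(\xi)-(x+\xi)=Q-QP=Q(I_r-P)$ and use $Q^\top Q=I_r$ to obtain $\normfro{\Retr_x(\xi)-(x+\xi)}=\normfro{I_r-P}$. Since $P$ is symmetric with eigenvalues $\sqrt{1+\mu_i}$, where $\mu_i\geq 0$ are the eigenvalues of $\xi^\top\xi$, the eigenvalues of $I_r-P$ are $1-\sqrt{1+\mu_i}$, and the elementary estimate $\sqrt{1+\mu_i}-1=\mu_i/(1+\sqrt{1+\mu_i})\leq \mu_i/2$ yields
\be
\normfro{I_r-P}=\Big(\sum_i(\sqrt{1+\mu_i}-1)^2\Big)^{1/2}\leq \tfrac12\Big(\sum_i\mu_i^2\Big)^{1/2}\leq \tfrac12\sum_i\mu_i=\tfrac12\normfro{\xi}^2,
\ee
so \eqref{ineq:ret_second-order} holds with the explicit constant $M=\tfrac12$. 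This is the ``simple proof'' the paper promises for the polar retraction.

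For the non-expansiveness \eqref{ineq:ret_nonexpansive}, let $Z:=x+\xi$ and expand, using $\normfro{Q}^2=\normfro{y}^2=r$,
\be
\normfro{Z-y}^2-\normfro{Q-y}^2=\normfro{Z}^2-r-2\inp{Z-Q}{y}.
\ee
With $Z=U\Sigma V^\top$ and $Q=UV^\top$ we have $Z-Q=U(\Sigma-I_r)V^\top$, hence $\inp{Z-Q}{y}=\Tr((\Sigma-I_r)B)=\sum_i(\sigma_i-1)B_{ii}$, where $B:=U^\top yV$ satisfies $\normop{B}\leq 1$ (so $B_{ii}\leq 1$) because $U,V$ are orthonormal and $y\in\St(d,r)$. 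Substituting $\normfro{Z}^2-r=\sum_i(\sigma_i^2-1)$ and factoring,
\be
\normfro{Z-y}^2-\normfro{Q-y}^2=\sum_i(\sigma_i-1)\big[(\sigma_i+1)-2B_{ii}\big]\geq 0,
\ee
where each summand is non-negative because $\sigma_i\geq 1$ and $B_{ii}\leq 1$ force $\sigma_i-1\geq 0$ and $(\sigma_i+1)-2B_{ii}\geq \sigma_i-1\geq 0$. Taking square roots gives the claim.

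The main obstacle — and the only place the argument genuinely uses that $\xi$ is \emph{tangent} rather than an arbitrary perturbation — is the last step: the inequality is simply false without $\sigma_i\geq 1$ (e.g., a $Z$ all of whose singular values lie below $1$ can be pushed \emph{away} from an antipodal $y$ upon projection). Everything therefore rests on first deriving $(x+\xi)^\top(x+\xi)=I_r+\xi^\top\xi\succeq I_r$ from tangency, after which both bounds collapse to scalar inequalities on the singular values. I would flag this dependence explicitly, as it is the one nontrivial ingredient.
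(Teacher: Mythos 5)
Your proof is correct, and it is essentially the argument the paper delegates to its citations: the lemma is stated without proof, with \eqref{ineq:ret_second-order} attributed to \cite{boumal2019global,Liu-So-Wu-2018} and \eqref{ineq:ret_nonexpansive} to \cite[Lemma 1]{li2019nonsmooth}, and your singular-value computation for the non-expansiveness (writing $x+\xi=U\Sigma V^\top$, $\Retr_x(\xi)=UV^\top$, and reducing to $\sum_i(\sigma_i-1)[(\sigma_i+1)-2B_{ii}]\geq 0$ with $\sigma_i\geq 1$ from tangency) is exactly the mechanism of that cited lemma. Your derivation of \eqref{ineq:ret_second-order} via $\normfro{Q(I_r-P)}=\normfro{I_r-P}$ and $\sqrt{1+\mu_i}-1\leq\mu_i/2$ is also correct and in fact sharper than the paper's Remark~\ref{remark:boundness on retraction}, giving $M=\tfrac12$ for the polar retraction with no restriction on $\normfro{\xi}$. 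The one caveat: as stated, \eqref{ineq:ret_second-order} is claimed for an arbitrary second-order retraction (QR, Cayley, etc.), and there your polar-factorization argument does not apply; the general case needs the Taylor-expansion-plus-compactness argument of \cite{boumal2019global}. Since the paper explicitly restricts itself to the polar retraction, your proof covers everything the paper actually uses, but you should state that restriction when proving the first claim. Your closing observation that tangency (hence $\sigma_i\geq 1$) is indispensable for \eqref{ineq:ret_nonexpansive} is well placed — the projection onto the non-convex set $\St(d,r)$ is not non-expansive for arbitrary perturbations.
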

\begin{remark}\label{remark:boundness on retraction}
	The constant $M$ in \eqref{ineq:ret_second-order} depends on the retraction. \cite{boumal2019global} established \eqref{ineq:ret_second-order} for all $\xi$.  If $\xi$ is uniformly  bounded\cite{Liu-So-Wu-2018}, then we   have a constant bound for $M$, which is independent of the dimension. For example, \cite[Append. E]{Liu-So-Wu-2018}  shows that if $\normfro{\xi}\leq 1$ then $M=1$   for polar retraction.  If $\normfro{\xi}\leq 1/2$ then $M=\sqrt{10}/4$  for QR decomposition\cite{Absil2009} and if  $\normfro{\xi}\leq 1/2$ then $M=4$ for Caley transformation\cite{Wen-Yin-2013}. The uniform bound of $\normfro{\xi}\leq 1$ will be satisfied automatically under mild assumptions. We remark that the inequality \eqref{ineq:ret_nonexpansive} will help with simplifying some of our analysis. If we do not use polar retraction, using \eqref{ineq:ret_second-order} implies 
	\be\label{ineq:ret_nonexpansive-1}	\normfro{\Retr_{x}(\xi)-y}\leq \normfro{x+\xi-y} + M\normfro{\xi}^2,\ee
	where the second-order term $M\normfro{\xi}^2$ changes the suitable step-size range in most of our analysis. 
\end{remark}

 We now show the relation between  $\nabla\varphi^t (\X)$ and $\grad \varphi^t(\X)$. Denoting $\p_{N_{x}\M}$ as the orthogonal projection onto the normal space $N_x\M$, a useful property of the projection  $	\p_{\T_x\M}(y - x), \forall y\in\St(d,r)$\cite[Section 6]{li2019nonsmooth} is that 
	\be\label{property of projection onto tangent}
	\bad
  &\quad  \p_{\T_x\M}(x - y) = x -  y - \p_{N_{x}\M}(x-y) \\
 	& = x - y - \half x\left( (x-y)^\top x + x^\top (x-y) \right)\\
& = x - y - \half x (x-y)^\top (x-y),
	\ead  \tag{P3}
  \ee
where we used $x^\top x = y^\top y = I_r$. This property implies that
\[  \p_{\T_x\M}(x - y) = x - y + \mathcal{O}(\normfro{y-x}^2). \] 
The relationship \eqref{property of projection onto tangent} implies the following lemma.

\begin{lemma}\label{lem:relation between R and E}
	For any $\X,\Y\in\St(d,r)^N$, we have	
	\be\label{ineq:key relation between R and E}
	\bad
	& \quad  \inp{\grad \varphi^t(\X)}{\Y-\X} 
	  =   \inp{\nabla \varphi^t(\X)}{\Y-\X } +\\
	 &  \frac{1}{4}\sum_{i=1}^N   \langle \sum_{j=1}^N W^t_{ij} (x_i - x_j)^\top (x_i - x_j),(y_i-x_i)^\top (y_i-x_i) \rangle\\
	 &\geq 	  \inp{\nabla \varphi^t(\X)}{\Y-\X}.
	 \ead
	 \ee
\end{lemma}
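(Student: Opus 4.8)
The plan is to move the tangent-space projection off the Riemannian gradient and onto the displacement $\Y-\X$, expand it with property \eqref{property of projection onto tangent}, and then identify the resulting correction term through a skew-symmetric/symmetric trace cancellation.

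First I would use the fact that $\grad\varphi_i^t(\X)=\p_{\T_{x_i}\M}(\nabla\varphi_i^t(\X))$ together with the self-adjointness of the orthogonal projection $\p_{\T_{x_i}\M}$ to write, for each $i$, $\inp{\grad\varphi_i^t(\X)}{y_i-x_i}=\inp{\nabla\varphi_i^t(\X)}{\p_{\T_{x_i}\M}(y_i-x_i)}$. Since the projection is linear, $\p_{\T_{x_i}\M}(y_i-x_i)=-\p_{\T_{x_i}\M}(x_i-y_i)$, and applying \eqref{property of projection onto tangent} with $x=x_i,\ y=y_i$ gives $\p_{\T_{x_i}\M}(y_i-x_i)=(y_i-x_i)+\half x_i(y_i-x_i)^\top(y_i-x_i)$. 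Summing over $i$ then produces the Euclidean inner product $\inp{\nabla\varphi^t(\X)}{\Y-\X}$ plus a correction term $\half\sum_{i=1}^N\inp{\nabla\varphi_i^t(\X)}{x_i(y_i-x_i)^\top(y_i-x_i)}$.

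Next I would substitute the explicit Euclidean gradient $\nabla\varphi_i^t(\X)=\sum_{j=1}^N W^t_{ij}(x_i-x_j)$ and rewrite each summand as a trace. The crux is the algebraic identity $2(x_i-x_j)^\top x_i-(x_i-x_j)^\top(x_i-x_j)=x_i^\top x_j-x_j^\top x_i$, which follows from $x_i^\top x_i=x_j^\top x_j=I_r$; its right-hand side is skew-symmetric. Because $(y_i-x_i)^\top(y_i-x_i)$ is symmetric, the trace of its product with any skew-symmetric matrix vanishes, so replacing $2(x_i-x_j)^\top x_i$ by $(x_i-x_j)^\top(x_i-x_j)$ inside the trace is free. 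This turns the correction term into exactly $\frac14\sum_{i=1}^N\inp{\sum_{j=1}^N W^t_{ij}(x_i-x_j)^\top(x_i-x_j)}{(y_i-x_i)^\top(y_i-x_i)}$, establishing the claimed equality.

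Finally, the inequality is immediate: for each $i,j$ both $(x_i-x_j)^\top(x_i-x_j)$ and $(y_i-x_i)^\top(y_i-x_i)$ are positive semidefinite, hence the trace of their product is nonnegative, and $W^t_{ij}\ge 0$ keeps the whole correction term nonnegative. The only genuinely nonroutine step is spotting the skew-symmetric identity above and the accompanying trace cancellation; everything else is bookkeeping with the projection formula \eqref{property of projection onto tangent}.
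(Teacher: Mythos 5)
Your proposal is correct and follows essentially the same route as the paper: both move the projection onto $\Y-\X$ by self-adjointness, expand the normal component via \eqref{property of projection onto tangent}, and then symmetrize the correction term (your skew-symmetric identity $2(x_i-x_j)^\top x_i-(x_i-x_j)^\top(x_i-x_j)=x_i^\top x_j-x_j^\top x_i$ plus the trace cancellation is exactly the paper's replacement of $\nabla\varphi_i^t(\X)^\top x_i$ by its symmetric part $\half\sum_j W^t_{ij}(x_i-x_j)^\top(x_i-x_j)$), before concluding with the nonnegativity of the trace of a product of positive semidefinite matrices.
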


\cref{lem:relation between R and E} directly yields   a descent lemma on the Stiefel manifold similar to the Euclidean-type inequality \cite{nesterov2013introductory}, which is helpful to identify the stepsize for global convergence. The stepsize $\alpha$ will be determined by the constant $L_t$ in \cref{lem:lipschitz} and the constant $M$ in \cref{lem:nonexpansive_bound_retraction}.   Lemma \ref{lem:lipschitz} is developed from a so-called Riemannian inequality in \cite{li2019nonsmooth}, which is used to analyze a class of Riemannian subgradient methods.  For the  function $\varphi^t(\X)$, we get a tighter estimation of $L_t$.
\begin{lemma}[Descent lemma] \label{lem:lipschitz}
	For the function $\varphi^t(\X)$ defined in \eqref{opt:consensus_thispaper1},
	 we have 
	\be\label{ineq:lipschitz}
	\bad
	&\quad \varphi^t(\Y) - \left[ \varphi^t(\X) + \inp{\grad \varphi^t(\X)}{\Y-\X} \right]  \\
	&\leq \frac{L_t}{2}\normfro{\Y-\X}^2, \quad \forall  \X,\Y\in\St(d,r)^N,
	\ead\ee
	where $L_t =   1 - \lambda_{N}(W^t) $ and $\lambda_{N}(W)$ is the smallest eigenvalue of $W$.
\end{lemma}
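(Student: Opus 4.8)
The plan is to use the fact that $\varphi^t$ is a genuine quadratic on the ambient space $(\R^{d\times r})^N$, so that its second-order expansion is \emph{exact}, and then to upgrade the resulting Euclidean estimate to the Riemannian statement through \cref{lem:relation between R and E}.

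First I would put $\varphi^t$ in matrix form. Expanding each $\normfro{x_i-x_j}^2$ and using that every row and column of the doubly stochastic matrix $W^t$ sums to one, one obtains
\[
\varphi^t(\X) = \tfrac{1}{2}\inp{\X}{\left((I_N - W^t)\otimes I_d\right)\X},
\]
so $\varphi^t$ is a homogeneous quadratic with Euclidean gradient $\nabla\varphi^t(\X) = \left((I_N - W^t)\otimes I_d\right)\X$ and constant Hessian $(I_N - W^t)\otimes I_d$. Being quadratic, its Taylor expansion about any $\X$ holds with equality,
\[
\varphi^t(\Y) = \varphi^t(\X) + \inp{\nabla\varphi^t(\X)}{\Y-\X} + \tfrac{1}{2}\inp{\Y-\X}{\left((I_N - W^t)\otimes I_d\right)(\Y-\X)}.
\]

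Next I would bound the quadratic remainder by the top eigenvalue. By \cref{assump:doubly-stochastic} every eigenvalue of $W$ lies in $(-1,1]$, hence every eigenvalue of $W^t$ is at most $1$; consequently the eigenvalues of $I_N - W^t$ are the numbers $1-\lambda_i(W^t)\geq 0$, whose maximum is $1-\lambda_N(W^t)=L_t$, and tensoring with $I_d$ does not change the spectrum. The Rayleigh-quotient bound then gives
\[
\tfrac{1}{2}\inp{\Y-\X}{\left((I_N - W^t)\otimes I_d\right)(\Y-\X)} \leq \frac{L_t}{2}\normfro{\Y-\X}^2.
\]

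Finally I would invoke \cref{lem:relation between R and E}. Its correction term is a sum of inner products of the positive semidefinite matrices $(x_i-x_j)^\top(x_i-x_j)$ and $(y_i-x_i)^\top(y_i-x_i)$ weighted by $W^t_{ij}\geq 0$, hence non-negative, so $\inp{\nabla\varphi^t(\X)}{\Y-\X}\leq\inp{\grad\varphi^t(\X)}{\Y-\X}$. Substituting this and the remainder bound into the exact expansion yields exactly \eqref{ineq:lipschitz}. The proof has no serious obstacle once these pieces are in place; the point that needs care is the direction of the inequality in \cref{lem:relation between R and E}, which must relax (not tighten) the upper bound when the Euclidean gradient is replaced by the Riemannian one, together with the observation that $I_N-W^t\succeq 0$ makes the ambient Hessian operator norm equal to the sharp constant $1-\lambda_N(W^t)$. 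A generic Riemannian descent estimate, such as the Riemannian inequality of \cite{li2019nonsmooth}, would instead pass through the retraction and overcharge the second-order term; exploiting the exact quadratic structure is what keeps $L_t$ tight.
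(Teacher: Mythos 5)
Your proposal is correct and follows essentially the same route as the paper: both write the Euclidean Hessian as $(I_N-W^t)\otimes I_d$ to obtain the Euclidean descent inequality with constant $L_t=1-\lambda_N(W^t)$, and then invoke \cref{lem:relation between R and E} to replace $\inp{\nabla\varphi^t(\X)}{\Y-\X}$ by the larger quantity $\inp{\grad\varphi^t(\X)}{\Y-\X}$, which relaxes the upper bound in the right direction. Your observation about the exactness of the quadratic Taylor expansion is a mild refinement of the paper's appeal to the generic smoothness bound, but the argument is the same.
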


We remark that a closely related inequality is the  restricted Lipschitz-type gradient presented in \cite[Lemma 4]{boumal2019global}, which is defined by the pull back function $g(\xi):=\varphi^t(\Retr_{\X}(\xi))$, whose Lipschitz $\tilde{L}$ relies on the retraction and the Lipschitz constant of Euclidean gradient. Also, the stepsize of RGM in \cite{boumal2019global} depends on the norm of Euclidean gradient.  Our inequality  does not rely on the retraction, which could be of independent interest.  
One could also consider the following Lipschitz  inequality (e.g., see \cite{zhang2016first})
\be
\begin{aligned}
\varphi^t(\Y)  \leq  \varphi^t(\X) + \inp{\grad \varphi^t(\X)}{\Exp_{\X}^{-1}\Y} + \frac{L_g}{2}d_g^2(\X,\Y)
\end{aligned}
\ee
where $\Exp_{\X}^{-1}\Y$ is the logarithm map and $d_g(\X,\Y)$ is the geodesic distance. Since involving logarithm map and geodesic distance brings computational and conceptual difficulties, we choose to use the form of \eqref{ineq:lipschitz}  for simplicity.  In fact, $L_t$   and $L_g$ are  the same for problem \eqref{opt:consensus_thispaper1}. 

 By now,  we have obtained  three second-order properties   \eqref{key}, \eqref{ineq:ret_second-order} \eqref{property of projection onto tangent} in  \cref{lem:distance between Euclideanmean and IAM,lem:nonexpansive_bound_retraction,lem:relation between R and E}. These lemmas would help us to solve the non-linearity issue, and  we can get a similar Riemannian restricted secant inequality as \eqref{restricted strong convexity}. Before that, in next section we proceed to show the global convergence of \cref{alg:DRCS} with a tight estimation of the  stepsize $\alpha$.   
	

\maketitle

\begin{abstract}
We study the convergence properties of Riemannian gradient method for solving the consensus problem (for an undirected connected graph) over the Stiefel manifold. The Stiefel manifold is a non-convex set and the standard notion of averaging in the Euclidean space does not work for this problem. We propose Distributed Riemannian Consensus on Stiefel Manifold (DRCS) and prove that it enjoys a local linear convergence rate to   global consensus. More importantly, this local rate asymptotically scales with the second largest singular value of the communication matrix, which is on par with the well-known rate in the Euclidean space. To the best of our knowledge, this is the first work showing the equality of the two rates. The main technical challenges include (i) developing a Riemannian restricted secant inequality for convergence analysis, and (ii) to identify the conditions (e.g., suitable step-size and initialization) under which the algorithm always stays in the local region.


\end{abstract}
 \IEEEpeerreviewmaketitle



\section{Numerical experiment}\label{sec:numerical}

We   test the stepsize   on a ring graph.  
The matrix $W$ is given as follows:
{\small
\begin{align*}W=\left(\begin{array}{cccccc}
1/3 & 1/3 &       &        & 	 & 1/3 \\
1/3 & 1/3 & 1/3   &        & 	 & 	   \\
    & 1/3 & 1/3   & \ddots & 	 &	   \\
    &     &\ddots & \ddots & 1/3 & 	\\
    &     &       &   1/3  & 1/3 & 1/3 \\
1/3 &     &       &    	   & 1/3 & 1/3
\end{array}\right).\end{align*}
}

\begin{figure}[ht]
	\begin{center}
		\minipage{0.23\textwidth}
		\subfigure[$W$: gradient]{
			{\includegraphics[width=0.9\linewidth]{./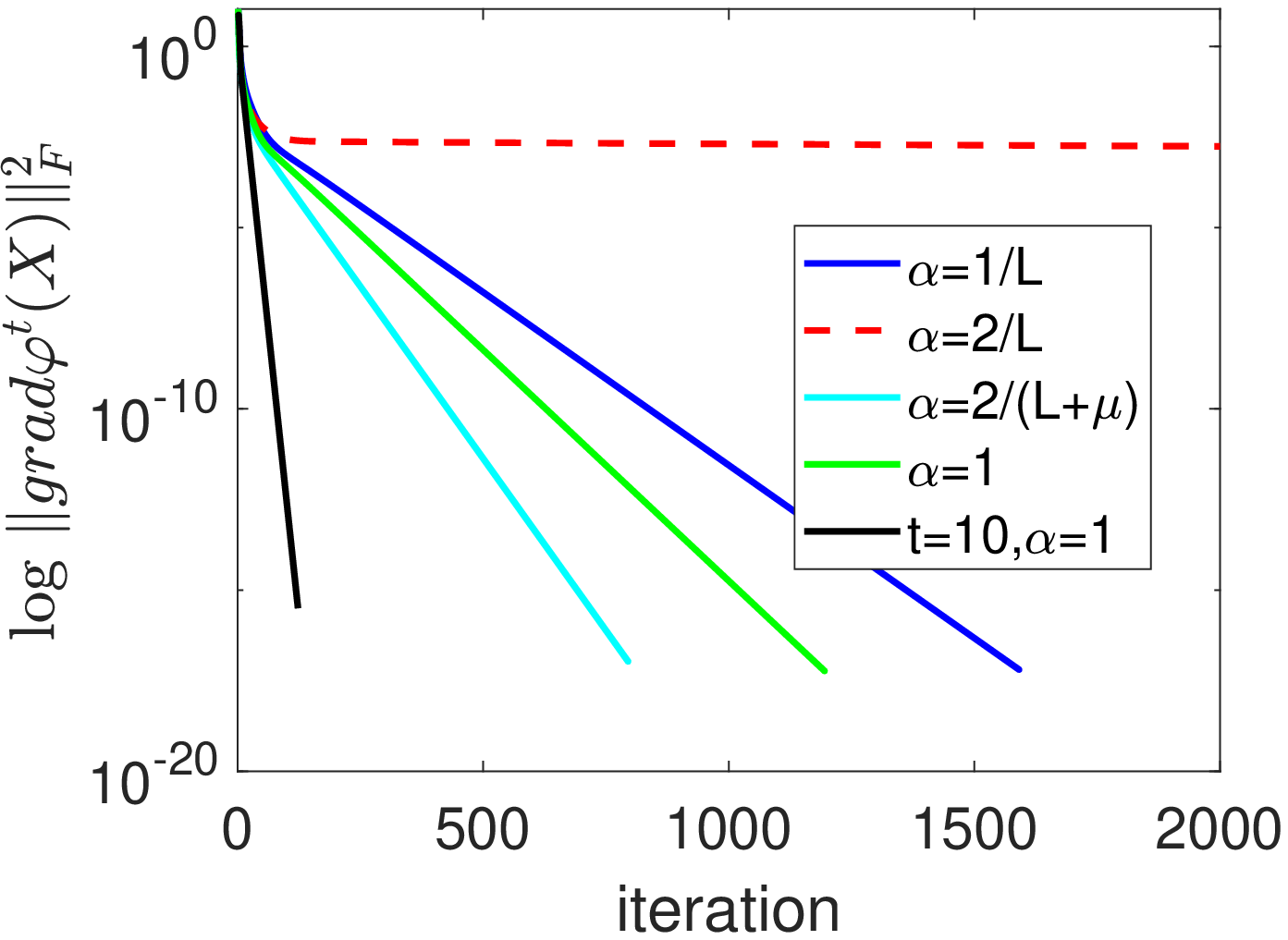}} }
		\endminipage\hfill
		\minipage{0.23\textwidth}
		\subfigure[$W$: distance]{	
			{\includegraphics[width=0.9\linewidth]{./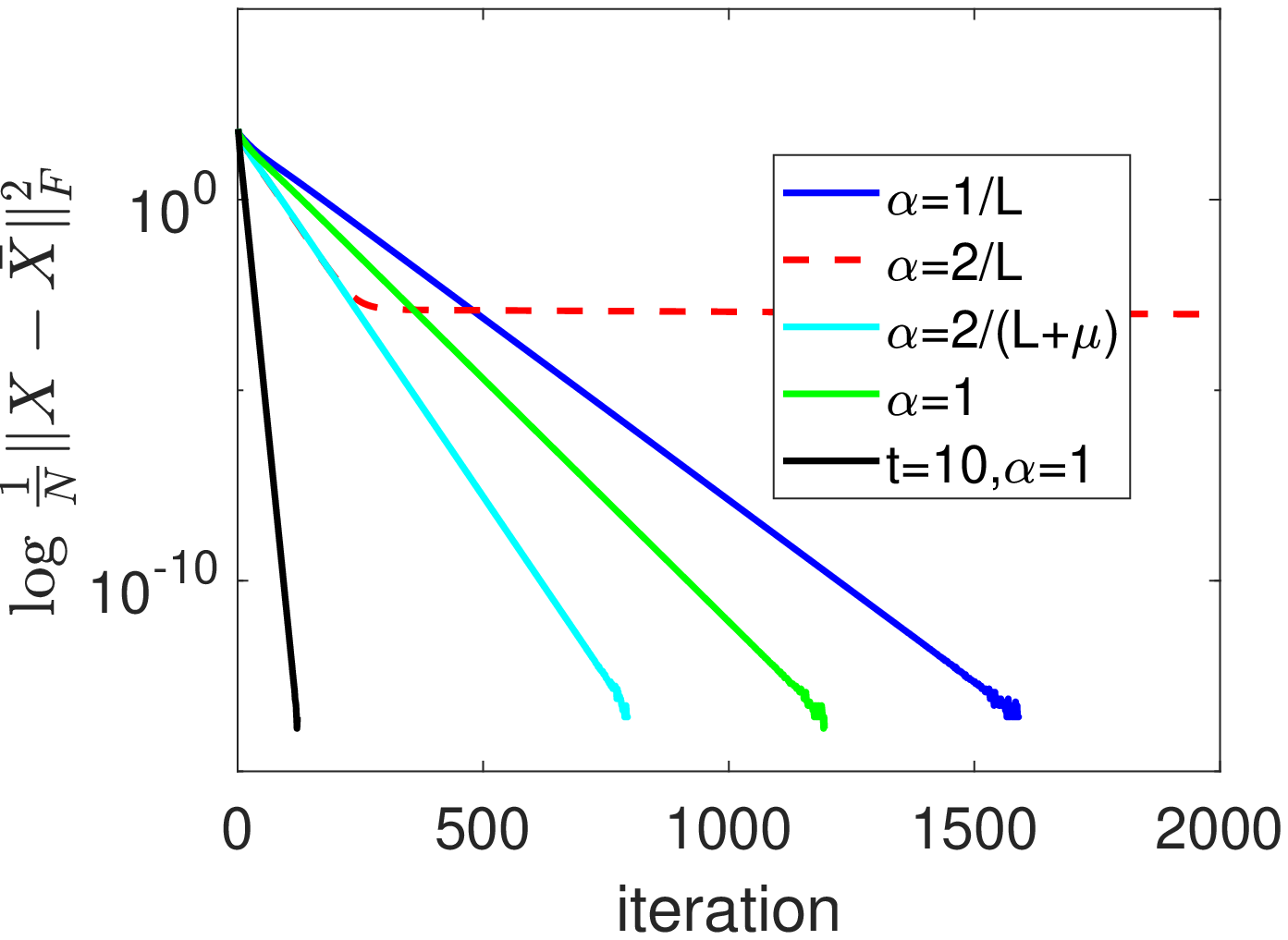}}}
		\endminipage\hfill	
		\minipage{0.23\textwidth}
		\subfigure[$(W+I_N)/2$: gradient]{\includegraphics[width=0.9\linewidth]{./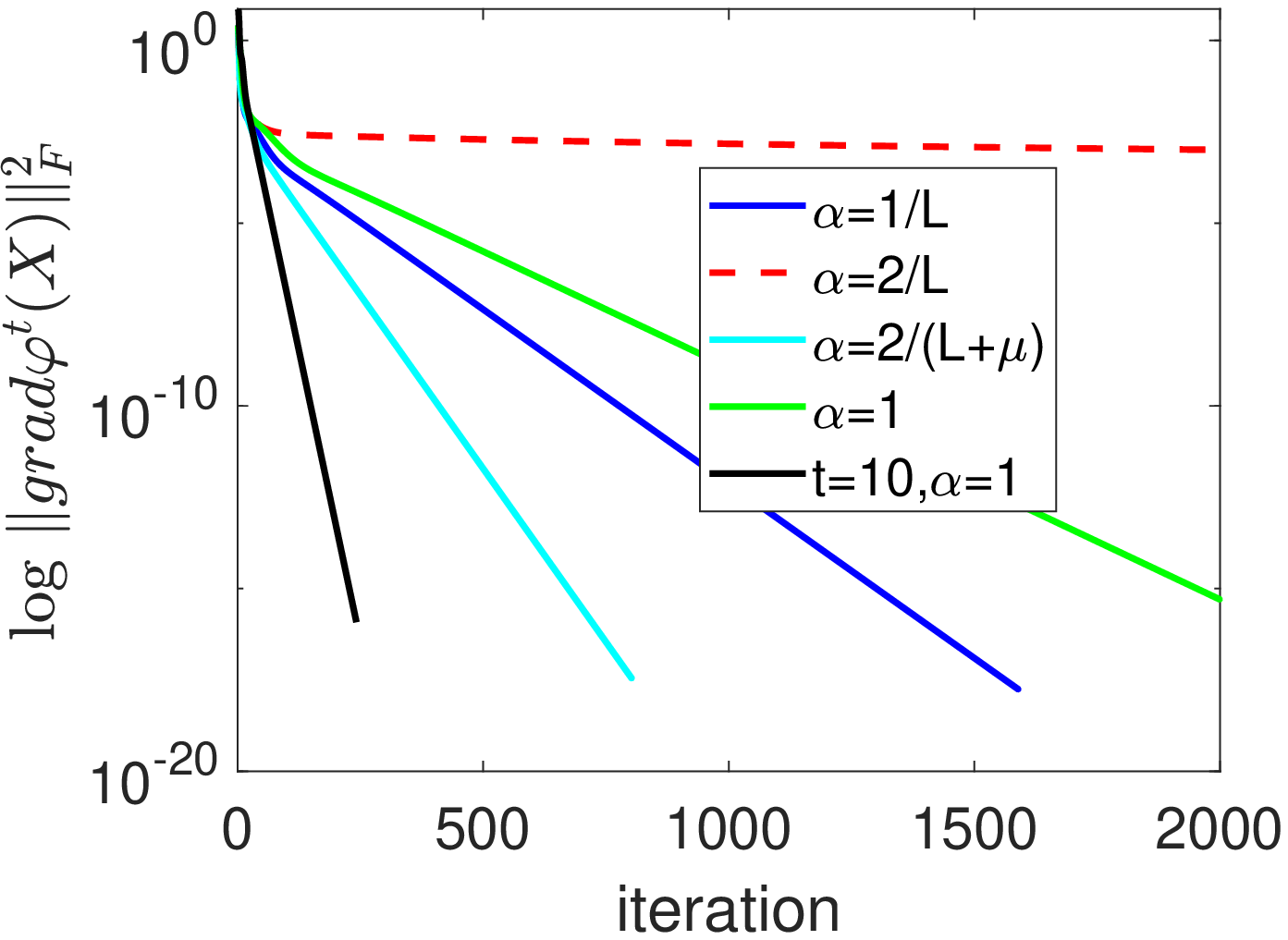} }
		\endminipage\hfill
		\minipage{0.23\textwidth}
		\subfigure[$(W+I_N)/2$: distance ]{\includegraphics[width=0.9\linewidth]{./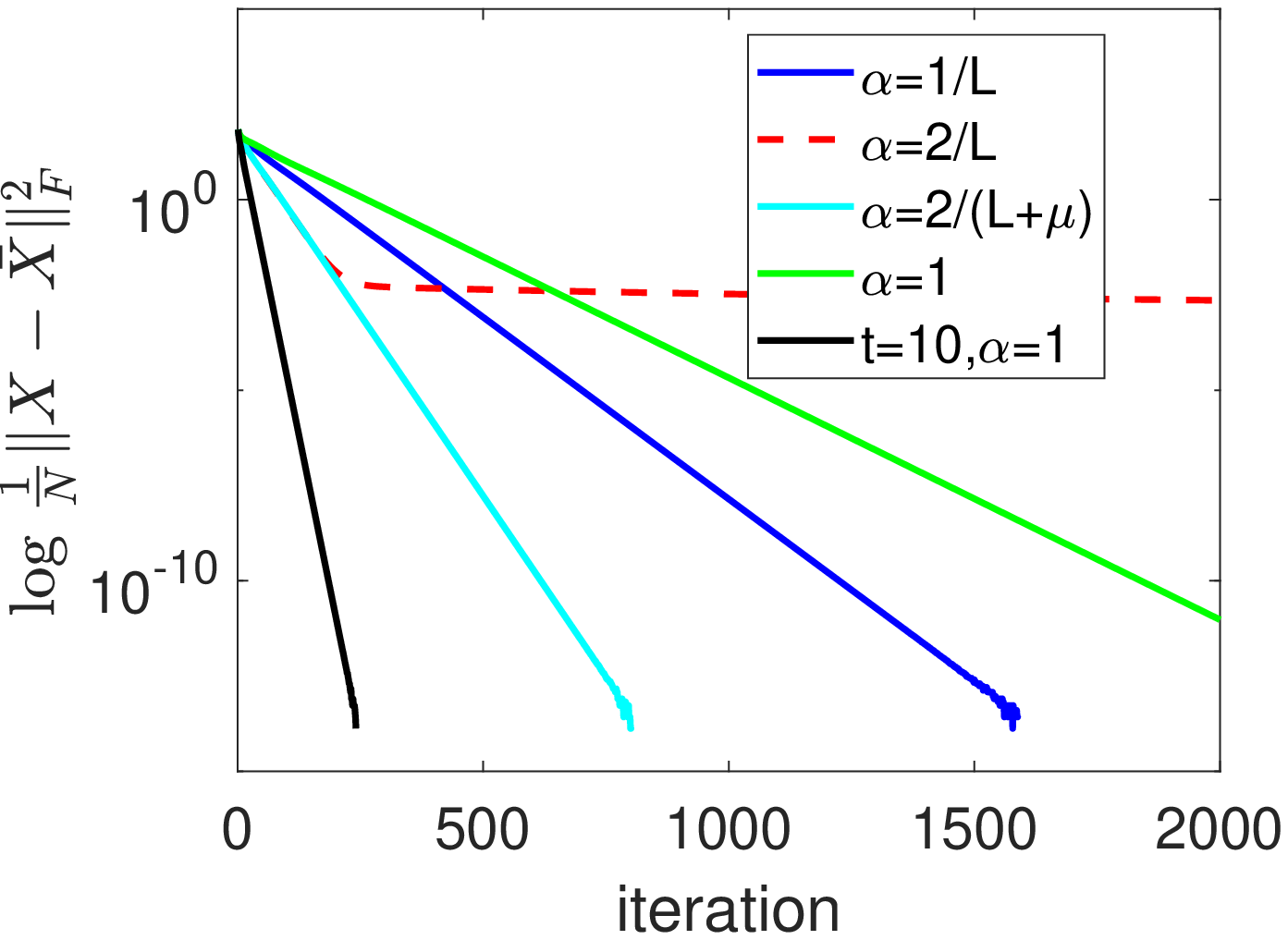}}
		\endminipage\hfill	
		\caption{Numerical results for $N = 30, d =5, r=2$. }\label{figurea}
		\label{figure:drgta}
	\end{center}
	\vskip -0.1in
\end{figure}
 We ran \cref{alg:DRCS} with four choices of stepsize: $1/L, 2/(L+\mu), 2/L, 1$, all of them are stopped when $\frac{1}{N}\normfro{\X_{k}-\bar \X_k}^2\leq  2\times 10^{-16}$. 
 In \cref{figurea} (a)(b), We have $L =  1-\lambda_{\min}  = \frac{4}{3}$.   For \cref{figurea} (c)(d), the doubly stochastic matrix is given by $(W+I_N)/2$  and we have $L =   1-\lambda_{\min}  = \frac{2}{3}$. 
The left column is  log-scale $\normfro{\grad \varphi^t(\X)}^2$ and the right column is log-scale distance $\frac{1}{N}\normfro{\X_{k}-\bar \X_k}^2 $. We see that \cref{alg:DRCS}  with $\alpha=2/L$  does not converge to a critical point. In both cases, $\alpha=2/(\mu+L)$ produces the fastest convergence.  The black line is the convergence of multi-step consensus with $t=10$ and $\alpha=1$ and the rest lines are for $t=1$. The convergence rate is about 10 times of that green line. 

\section{Conclusion}
In this paper, we provided the global and local convergence analysis of DRCS, a distributed method for consensus on the Stiefel manifold. We showed that the convergence rate asymptotically matches the Euclidean counterpart, which scales with the second largest singular value of the communication matrix. The main technical contribution is to generalize the Euclidean restricted secant inequality to the Riemannian version. In the future work, we would like to study the preservation of iteration in the region $\N_{2,t}$ without multi-step consensus and to estimate the constant $C_{\M,\varphi^t}$ for stepsize.  

\section*{APPENDIX}\label{sec:appendix}

\begin{proof}[\textbf{Proof of inequality \eqref{restricted strong convexity}}]
   Without loss of generality, we assume $d=r=1$.  Let $U_1,U_2,\ldots,U_N$ be the orthonormal eigenvectors of $I_N - W$, corresponding to the eigenvalues $0 = \lambda_1 < \lambda_2 \leq \ldots\leq \lambda_N$. 
    Then, we have that $\X- \hat \X = \sum_{i=1}^N c_i U_i$. Since $\X-\hat \X$ is orthogonal to $ \mathrm{span}\{U_1\}$, we have $c_1=0$. 
    Note that $\nabla \varphi(\X) = (I_N-W)\X= (I_N-W)(\X-\hat\X)$. We get
   \be\label{eq:Euclidean helpful} \normfro{\X - \hat\X}^2 = \sum_{i=2}^N c_i^2 \quad \text{and} \quad \normfro{\nabla\varphi(\X)}^2 =\sum_{i=2}^N c_i^2 \lambda_i^2.     \ee
     
   Then, \eqref{restricted strong convexity} reads
   \begingroup\allowdisplaybreaks
   \begin{align}
     &\quad \inp{\X - \hat\X}{\nabla \varphi (\X)}  =\inp{\X - \hat\X}{(I_N - W) (\X - \hat\X)}\notag \\
    &~~~~~~~~~~ = \inp{\sum_{i=2}^N c_i U_i}{\sum_{i=2}^N c_i \lambda_i U_i}\notag \\
      &~~~~~~~~~~ = \sum_{i=2}^N c_i^2 \lambda_i \geq \frac{1}{L+\mu}  \sum_{i=2}^N ( \mu Lc_i^2 +  c_i^2 \lambda_i^2 )\notag \\
   &~~~~~~~~~~ = \frac{\mu L}{\mu + L}\normfro{\X - \hat\X}^2 + \frac{1}{\mu + L} \normfro{\nabla \varphi(\X)}^2,\label{RSI Euclidean proof}
   \end{align}
   \endgroup
   where the inequality follows since $\mu = \lambda_2$ and $L=\lambda_N$.  
\end{proof}

	\begin{proof}[\textbf{Proof of linear rate of PGD with $\alpha_e=1$}]
	Firstly, one can easily verify $L\normfro{\X - \hat \X} \geq \normfro{\nabla \varphi(\X)} \geq \mu \normfro{\X - \hat \X}$ using \eqref{eq:Euclidean helpful}. 
	We have
 \be\label{ineq:Eucldiean rate with 1}
 \begin{aligned}
      &\quad \normfro{\X_{k+1} - \hat{\X}_{k+1}}^2\leq \normfro{\X_{k+1} - \hat{\X}_{k}}^2\\
      &\leq \normfro{\X_{k} - \hat{\X}_{k}}^2 + \normfro{\nabla\varphi(\X_{k})}^2 - 2\inp{\nabla \varphi(\X_{k})}{\X_{k}-\hat\X_{k}} \\
      & \stackrel{\eqref{restricted strong convexity}}{\leq}(1-\frac{2\mu L}{\mu + L})\normfro{\X_{k} - \hat{\X}_{k}}^2 + (1- \frac{2}{\mu+L})  \normfro{\nabla\varphi(\X_{k})}^2.
 \end{aligned}\ee
 If $\frac{2}{\mu+L}\geq 1$, i.e., $\lambda_2(W)+\lambda_N(W)\geq 0$, this implies $\sigma_2 = \lambda_2(W).$ Combining $\normfro{\nabla \varphi(\X)} \geq \mu \normfro{\X - \hat \X}$ with \eqref{ineq:Eucldiean rate with 1} yields 
  \begin{align*}
      &\quad \normfro{\X_{k+1} - \hat{\X}_{k}}^2\\
      & \leq (1-\frac{2\mu L}{\mu + L} - \mu^2 + \frac{2\mu^2}{L+\mu} )\normfro{\X_{k} - \hat{\X}_{k} }^2\\
      & = (1-\mu)^2 \normfro{\X_{k} - \hat{\X}_{k} }^2=\sigma_2^2 \normfro{\X_{k} - \hat{\X}_{k} }^2.
 \end{align*}
 If $\frac{2}{\mu+L}< 1$, then $\lambda_2(W)+\lambda_N(W)< 0$, this implies $\sigma_2 = -\lambda_N(W).$ Combining $\normfro{\nabla \varphi(\X)} \leq L \normfro{\X - \hat \X}$ with   \eqref{ineq:Eucldiean rate with 1} implies
  \begin{align*}
      &\quad \normfro{\X_{k+1} - \hat{\X}_{k}}^2\\
      & \leq (1-\frac{2\mu L}{\mu + L} - L^2 + \frac{2L^2}{L+\mu} )\normfro{\X_{k} - \hat{\X}_{k} }^2\\
      & =  (1-L)^2 \normfro{\X_{k} - \hat{\X}_{k} }^2 = \sigma_2^2 \normfro{\X_{k} - \hat{\X}_{k} }^2.
 \end{align*}
 \end{proof}

\begin{proof}[\textbf{Proof of Lemma \ref{lem:distance between Euclideanmean and IAM}}]
	
	Note that
	\be\label{ineq:reg_condition-2-1}
	\bad
	\normfro{\X-\hat \X}^2 &= 	\sum_{i=1}^N \normfro{x_{i}- \hat x}^2=  N (r - \normfro{\hat x}^2) \\
	& = N (\sqrt{r}+ \normfro{\hat x})( \sqrt{r}-  \normfro{\hat x})\\
	&\leq 2N (  {r}-  \sqrt{r}  \normfro{\hat x}),
	\ead \ee
	where the inequality is due to $ \normfro{ \hat x}\leq  \sqrt{r}$. Since
	\be\label{iam}\bar{x} = \p_{\St}(\hat x) = uv^\top,\ee where $usv^\top = \hat x$ is the singular value decomposition, we get
	\be\label{equation:l2 distance X barX}\bad \normfro{\X-\bar{\X}}^2&= \sum_{i=1}^N(2r -2\inp{x_i}{\bar x})\\
	& = 2N(r-\inp{\hat x}{\bar x})= 2N(r -   \|\hat x\|_{*}), \ead \ee
	where $\|\cdot\|_*$ is the trace norm.  Let $\hat \sigma_1\geq \ldots\geq \hat \sigma_r$ be the singular values of $\hat x$.  It is clear that $\hat\sigma_1\leq 1$ since   $\normtwo{\hat x}\leq \frac{1}{N} \sum_{i=1}^N \normtwo{x_i}\leq 1$. The inequality $\|\hat x\|_* = \sum_{i=1}^r \hat{\sigma}_i\leq \sqrt{r}\sqrt{\sum_{i=1}^r \hat \sigma_i^2} =\sqrt{r}\normfro{\hat x} $, together with \eqref{ineq:reg_condition-2-1} and \eqref{equation:l2 distance X barX} imply that
	\[ \normfro{\X - \hat \X}^2 \leq \normfro{\X - \bar \X}^2. \]
	Next, we also have $\|\hat x\|_* = \sum_{i=1}^r \hat{\sigma}_i \geq  \sum_{i=1}^r \hat{\sigma}_i^2 =  \normfro{\hat x}^2$. This yields 
	\[ \frac{1}{2}\normfro{\X-\bar \X}^2 =  N (r -  \|\hat x\|_{*}) \leq  N (r -  \normfro{\hat x}^2) =\normfro{\X-\hat \X}^2,  \]
	which proves \eqref{ineq:relation_eum_manifmean}.

	By utilizing the fact $	\normfro{\X-\hat \X}\leq \normfro{\X-\bar{\X}}$, we have
	\be \label{ineq:reg_condition-2-2}
	\bad
	\sqrt{r} \sqrt{\sum_{i=1}^r \hat\sigma_i^2} =\sqrt{r}	\normfro{\hat x} \geq \normfro{\hat x}^2  =  r-   \frac{1}{N} \normfro{\hat{\X}-\X}^2\geq  r-   \frac{1}{N} \normfro{\bar{\X}-\X}^2, 
	\ead\ee 
	where we used $\normfro{\hat x}=\normfro{\frac{1}{N}\sum_{i=1}^{N}x_i}\leq \sqrt{r}.$ If $\normfro{\X - \bar \X}^2\leq  N/2$ (by assumption), we can square both sides of above and note $\hat\sigma_i^2\leq 1$ for $i\in [r-1]$ to get
	\[ \hat \sigma_r^2\geq 1- 2\frac{\normfro{\X-\bar\X}^2}{N} + \frac{\normfro{\X-\bar\X}^4}{N^2r}\geq  1- 2\frac{\normfro{\X-\bar\X}^2}{N}. \]
	Then, we have \be\label{ineq:reg_condition-2-3} \hat \sigma_r \geq  \sqrt{1-2\frac{\normfro{\X-\bar\X}^2}{N}  } \geq 1-2 \frac{\normfro{\X-\bar\X}^2}{N},   \ee
	where we use $\sqrt{1-s}\geq  1- s$ for any $1\geq s\geq 0$.
	Recall that $\bar{x} = \p_{\St}(\hat x) = uv^\top$.
	Hence, it follows that 
	\begin{align*}
	&\quad\normfro{\hat{x} - \bar x}^2  = r - 2\inp{ \hat{x}}{ \bar x}+ \normfro{\hat{x}}^2 \\&= r - 2\sum_{i=1}^{r}\hat \sigma_i + \sum_{i=1}^{r}\hat \sigma_i^2 = \sum_{i=1}^r (1-\hat\sigma_i)^2  \leq \frac{ 4r\normfro{\X-\bar\X}^4}{N^2}.
	\end{align*}
	Hence, we have proved \eqref{key}. 
 Finally, 
 \begin{align*}
 	&\quad\normfro{\X - \hat \X}^2 =  \sum_{i=1}^N \inp{x_i - \hat x}{x_i - \hat x}\\
 	&=  \sum_{i=1}^N \inp{x_i - \hat x}{x_i - \bar x} +  \sum_{i=1}^N \inp{x_i - \hat x}{\bar x- \hat x} \\
 	& =   \normfro{\X-\bar\X}^2 + \sum_{i=1}^N \inp{\bar{x}- \hat x }{x_i - \bar x  }\notag\\
 	& = \normfro{\X-\bar\X}^2 - N \normfro{\bar{x}-\hat x }^2\notag\\
 	&\stackrel{\eqref{key}}{\geq } \normfro{\X-\bar\X}^2  -  \frac{4r \normfro{ \X-\bar\X}^4}{ {N}}\notag,
 \end{align*}
 where we used $\sum_{i=1}^N \inp{x_i - \hat x}{\bar x- \hat x}=0$ in the third line. 
\end{proof}

\begin{proof}[\textbf{Proof of \cref{lem:relation between R and E}}]
	
	It follows that
	\begin{align}
	&   \inp{\grad \varphi^t(\X)}{\Y - \X}  \notag \\
	=&   \inp{\nabla \varphi^t(\X)}{\p_{\T_{\X}\M^N}(\Y - \X)}\notag \\
	=&  \inp{\nabla \varphi^t(\X)}{\Y-\X}\notag  -  \sum_{i=1}^N\inp{ \nabla \varphi^t_i(\X)}{\p_{N_{x_i}\M} (y_i - x_i) } \notag\\
	=&  \inp{\nabla \varphi^t(\X)}{\Y-\X}+ \frac{1}{4}\sum_{i=1}^N\inp{   \nabla \varphi^t_i(\X)^\top x_i + x_i^\top \nabla \varphi^t_i(\X) }{    (y_i-x_i)^\top (y_i-x_i) }\notag.
	\end{align}	
	
	Since	\[ \frac{1}{2}[ \nabla \varphi^t_i(\X)^\top x_i + x_i^\top \nabla \varphi^t_i(\X)] = \half \sum_{j=1}^N W^t_{ij} (x_i - x_j)^\top (x_i - x_j)   \]
	  is positive semi-definite,
	we get 
	\be\label{compareL} \bad  
	\sum_{i=1}^N\inp{ \nabla \varphi^t_i(\X)}{\half x_i (y_i-x_i)^\top (y_i-x_i) }
	\geq   0. \ead\ee 
 Therefore, we get
	\begin{align}
	\inp{\nabla \varphi^t(\X)}{\Y-\X}
	&\leq  \inp{\grad \varphi^t(\X)}{\Y - \X} .
	\end{align}
\end{proof}

 \begin{proof}[\textbf{Proof of Lemma \ref{lem:lipschitz}}]
The largest eigenvalue of  $\nabla^2 \varphi^t(\X) =  (I_N-W^t)\otimes I_d $ is $L_\phi=  1- \lambda_{N}(W^t) $  in Euclidean space, where $\lambda_{N}(W^t)$ denotes the smallest eigenvalue of $W^t$.  For any $\X,\Y\in(\R^{d\times r})^N$, it follows that\cite{nesterov2013introductory}
	\be\label{ineq:lip_smooth_E}
	\varphi^t(\Y) -  \left[ \varphi^t(\X) + \inp{\nabla \varphi^t(\X)}{\Y-\X} \right]  \leq \frac{L_\phi}{2}\normfro{\Y-\X}^2.
	\ee	
Together with \eqref{ineq:key relation between R and E}, this implies that
	\be\label{ineq:lip_smooth_R}
	\bad
\varphi^t(\Y) - \left[ \varphi^t(\X) + \inp{\grad  \varphi^t(\X)}{\Y-\X} \right]  
\leq \frac{L_\phi}{2}\normfro{\X-\Y}^2.
\ead\ee	
The proof is completed. 
\end{proof}

\begin{proof}[\textbf{Proof of \cref{lem:sublinear-convergence}}]
	The proof follows \cite[Theorem 3]{Liu-So-Wu-2018}. We only need to verify the following three properties:
	\begin{enumerate}
		\item [(A1).] (Sufficient descent) There exists a constant $\kappa>0$ and sufficiently large $K_1$ such that for $k\geq K_1$, {\small
		\begin{align*}
		\varphi^t(\X_{k+1})-\varphi^t(\X_k)
		\leq  -\kappa \normfro{\grad \varphi^t(\X_k)}\cdot\normfro{\X_k - \X_{k+1}}.
		\end{align*} }
		\item [(A2).] (Stationarity) There exists an index $K_2 > 0$ such that for $k\geq K_2$,
		\[ \normfro{\grad \varphi^t(\X_k)} = 0 \Rightarrow  \X_k = \X_{k+1}. \]
		\item[(A3).] (Safeguard) There exist a constant $C_3>0$ and an index $K_3> 0$  such that for $k \geq K_3$
		\[ \normfro{\grad \varphi^t(\X_k) }\leq C_3\normfro{\X_{k}-\X_{k+1}}. \]
	\end{enumerate}
 The main difference is that we use \cref{lem:lipschitz} to derive the sufficient descent property (A1). 
	Let us first consider (A1).
	Using \eqref{ineq:lipschitz} of \cref{lem:lipschitz}, one has 
	\begin{align*}
	\quad &\varphi^t(\X_{k+1})\leq  \varphi^t(\X_k) + \inp{\grad \varphi^t(\X_k)}{\X_{k+1} - \X_k} + \frac{L_t}{2}\normfro{\X_k - \X_{k+1}}^2.
	\end{align*}
	Let us start with the following
	\begin{align*}
	&\quad \inp{\grad \varphi^t(\X)}{\X_{k+1} - \X_k} \\
	&= \sum_{i=1}^N  \inp{\grad \varphi^t_i(\X_k) }{x_{i,k+1} - x_{i,k}}\\
	& =  \sum_{i=1}^N  \inp{\grad \varphi^t_i(\X_k) }{\Retr_{x_{i,k}}(-\alpha \grad \varphi^t_i(\X_k) )  - x_{i,k}}\\
	&\stackrel{\eqref{ineq:ret_second-order}}{\leq} (M\alpha^2\cdot \normfro{\grad \varphi^t(\X_k) }-\alpha)\normfro{\grad \varphi^t(\X_k) }^2  
	\end{align*}
	and 
	$
	\normfro{\X_{k+1}-\X_k}^2 \stackrel{\eqref{ineq:ret_nonexpansive}}{\leq } \alpha^2\normfro{\grad \varphi^t(\X_k) }^2 .
	$
	We now get
	\begin{align*}
	 \varphi^t(\X_{k+1})\leq  \varphi^t(\X_k)  + [  (MG_k+\frac{L_t}{2})\alpha^2-\alpha]\normfro{\grad \varphi^t(\X_k) }^2,
	\end{align*}
	where $G_k = \normfro{\grad \varphi^t(\X_k)}$.
	Therefore, for any $\beta\in(0,1)$, if $\alpha < \bar{\alpha}_k:=\frac{1-\beta}{MG_k + L_t/2}$, we have
	\be\label{ineq:suff-decrease}
	\varphi^t(\X_{k+1})
	\leq  \varphi^t(\X_k)  -\alpha\beta\normfro{\grad \varphi^t(\X_k) }^2.
	\ee
	Note that $\bar{\alpha}_k\geq \frac{1-\beta}{MG + L_t/2}$, the stepsize $\alpha < \bar{\alpha}_k$ is well defined.
	Again, by
	$
	\normfro{\X_{k+1}-\X_k}^2 \stackrel{\eqref{ineq:ret_nonexpansive}}{\leq } \alpha^2\normfro{\grad \varphi^t(\X_k) }^2,
	$
	 we get the sufficient decrease condition in (A1) for any $k\geq 0$ with $\kappa = \beta$
	\be\label{ineq:suff-decrease-1}
	\bad
	 \varphi^t(\X_{k+1})\leq  \varphi^t(\X_k)  -\beta\normfro{\grad \varphi^t(\X_k) } \cdot\normfro{\X_{k}-\X_{k+1}}.
	\ead\ee
	The condition (A2) is automatically satisfied by the iteration of \cref{alg:DRCS}. For (A3), the argument is the same as that of \cite[Theorem 3]{Liu-So-Wu-2018}. By \eqref{ineq:suff-decrease}, we have  $\sum_{k=0}^\infty\alpha \normfro{\grad\varphi^t(\X_k) }^2 \leq \varphi^t(\X_0) - \inf\varphi^t(\X)<\infty$, which implies 
	\[ \lim_{k\rightarrow \infty} \alpha \normfro{\grad\varphi^t(\X_k) }^2 = 0.  \]
	So, there exists $K_3> 0$ such that $\normfro{\grad\varphi^t(\X_k) }$ is sufficiently small  whenever $\alpha >0$. Using the second-order property of retraction $\Retr_x(\xi) = x+\xi + \mathcal{O}(\normfro{\xi}^2)$, we have the property (A3). 
	
	By \cite[Theorem 2.3]{schneider2015convergence}, (A1)-(A2) together with \eqref{L-inequality} imply the convergence to a critical point. With (A3), one has that the convergence rate is sub-linearly if $\theta<1/2$ and linearly if $\theta=1/2$, respectively.  
\end{proof}

\begin{proof}[\textbf{Proof of Proposition \ref{prop:local opt}}]
 Let $B: = W\otimes I_d$. The necessity is trivial by letting $y=[B\X]_i$ if $x_1=x_2=\ldots=x_N$. Now, if $\X$ is a first-order critical point, then it follows from \cref{prop:optcond} that
	\begin{align*} 
	&\grad \varphi^t_i(\X)   =  \nabla \varphi^t_i(\X) - \frac{1}{2}x_i(x_i^\top \nabla \varphi^t_i(\X) +\nabla \varphi^t_i(\X)^\top x_i)\\
	& = (I_d - \frac{1}{2}x_ix_i^\top)( \nabla \varphi^t_i(\X) -x_i\nabla \varphi^t_i(\X)^\top x_i) = 0, \quad \forall i \in[N].
	\end{align*}
	Note that since $I_d - \frac{1}{2}x_ix_i^\top$ is invertible, one has
	\be\label{first_order:condition}  [B\X]_i- x_i ( [B\X]_i^\top x_i)=0, \quad \forall i\in[N].\ee
	Multiplying both sides by $x_i^\top$ yields
	\be\label{first_order:condition1}  x_i^\top [B\X]_i =   [B\X]_i^\top x_i , \quad \forall i\in[N].\ee
    For the sufficiency, 
	let $\Gamma_i :=\sum_{j=1}^N W_{ij}(x_j^\top x_i)$, $i\in[N].$  
	From  \eqref{first_order:condition}, we get 
	\be\label{eq:prop:second_is_global-derivation0}  x_i \Gamma_i =  \sum_{j=1}^N W_{ij} x_j, \quad \forall i\in[N].\ee
	Summing above over $i\in[N]$ yields
	$\sum_{i=1}^N  x_i\Gamma_i = \sum_{i=1}^N x_i$. 
	Taking inner product with $y$ on both sides gives	$\sum_{i=1}^N\inp{y}{x_i(I_r-\Gamma_i)}  =0$. Note that $I_r-\Gamma_i$ is symmetric for all $i$ due to \eqref{first_order:condition1}. It is also positive semi-definite. 
	Since $\inp{x_i}{y}> r-1$ for all $i$, we get that $\Omega_i : = \frac{1}{2}(x_i^\top y + y^\top x_i)$ is positive definite.
Then, it follows that
	\begin{align*}
	  \inp{y}{x_i(I_r-\Gamma_i)} 
	   = \Tr(\Omega_i^{1/2}(I_r-\Gamma_i) \Omega_i^{1/2} ) \geq 0.
	\end{align*}
The equation  $\sum_{i=1}^N\inp{y}{x_i(I_r-\Gamma_i)}   =0$ suggests that $I_r = \Gamma_i$,
  which also implies $x_1=x_2=\ldots=x_N$  by \eqref{eq:prop:second_is_global-derivation0}. 
	
	Furthermore, suppose $y=\bar x$ which is the IAM of $\X$. The condition $\distinfty(\X,\cX^*)<\sqrt{2}$ means that 
	$\normfro{\bar x- x_i}^2< 2$, or equivalently, $\inp{y}{x_i}>r-1$ for all $i\in[N]$.  
\end{proof}

\begin{proof}[\textbf{Proof of \cref{lem:stay in L sphere}}]
	We prove it by induction. Suppose \eqref{ineq:lower bound inner product} holds for some $k$. 
	For $k+1$, we first have 
	\begin{align*}
		&  	\inp{x_{i,k}  - \alpha \grad \varphi_i(\X_k)  }{y}\\
		=	& \langle x_{i,k} -  \frac{\alpha}{2} x_{i,k} \sum_{j=1}^N W_{ij}  ( x_{j,k}^\top x_{i,k} +x_{i,k}^\top x_{j,k} ),y \rangle+ \alpha\sum_{j=1}^N W_{ij}  	\inp{	x_{j,k} }{y}   \\
		=&   \frac{\alpha}{2}      \sum_{j=1}^N W_{ij}  \normfro{ x_{i,k}-x_{j,k}}^2 \cdot   \inp{ x_{i,k}}{y}  +(1-\alpha)\inp{ x_{i,k}}{y}  + \alpha\sum_{j=1}^N W_{ij}  	\inp{	x_{j,k} }{y}   \\
		\geq & \delta \frac{\alpha^2 }{2} \sum_{j=1}^N W_{ij} \normfro{x_{i,k} - x_{j,k}}^2 +\delta.
	\end{align*}
The last inequality follows from $\alpha \leq 1$. Then, since $x_{i,k+1} = \frac{ x_{i,k}  - \alpha \grad \varphi_i(\X_k)}{\sqrt{1+ \alpha^2\normfro{\grad \varphi_i(\X_k)}^2 }}$ (due to \eqref{eq:polar}), we get
	\begingroup\allowdisplaybreaks
	\begin{align}
		\inp{x_{i,k+1} }{y}= & \frac{\inp{x_{i,k}  - \alpha \grad \varphi_i(\X_k)  }{y}}{\sqrt{1+ \alpha^2\normfro{\grad \varphi_i(\X_k)}^2 }}\notag\\
		\geq & \frac{\inp{x_{i,k}  - \alpha \grad \varphi_i(\X_k)  }{y}}{ 1+ \frac{\alpha^2}{2} \normfro{\grad \varphi_i(\X_k)}^2 }\label{implicit1}\\
		\geq &  \frac{\inp{x_{i,k}  - \alpha \grad \varphi_i(\X_k)  }{y}}{ 1+  \frac{\alpha^2}{2} \sum_{j=1}^N W_{ij}\normfro{x_{i,k}-x_{j,k}}^2 }\label{implicit2} \geq \delta,		 
	\end{align}\endgroup
	where we used $\sqrt{1+z^2}\leq 1+\frac{1}{2}z^2$ for any $z\geq 0$ in \eqref{implicit1} and $ \normfro{\grad \varphi_i(\X_k)}^2\leq  \normfro{\nabla \varphi_i(\X_k)}^2 \leq \sum_{j=1}^N W_{ij}\normfro{x_{i,k}-x_{j,k}}^2 $ in \eqref{implicit2}. 
\end{proof}

\begin{proof}[\textbf{Proof of \cref{prop:quadratic growth}}]

	We rewrite the objective $\varphi^t(\X)$ as follows	
	\begingroup\allowdisplaybreaks
	\begin{align}\label{eq:function and gradient}
	& 2\varphi^t(\X)  =  \sum_{i=1}^N \normfro{x_i}^2-  \sum_{i=1,j=1}^N W^t_{ij}\inp{x_i}{x_j}\notag\\
	=&  \sum_{i=1}^N \langle x_i,x_i - \sum_{j=1}^N W^t_{ij}x_j\rangle\notag\\
	= & \inp{\nabla \varphi^t(\X)}{ \X }.
	\end{align}\endgroup
	Note that  $ \inp{\nabla \varphi^t(\X)}{ \hat\X  } = 0$, we get 
	\begingroup\allowdisplaybreaks
 \begin{align}
2 \varphi^t(\X) &= \inp{\nabla \varphi^t(\X)}{ \X - \hat{\X} }\notag\\
 &\stackrel{\eqref{restricted strong convexity}}{\geq}  \frac{\mu_t L_t}{\mu_t + L_t}\normfro{\X - \hat\X}^2 + \frac{1}{\mu_t + L_t} \normfro{\nabla \varphi^t(\X)}^2 \notag\\
 &\geq \mu_t \normfro{\X - \hat\X}^2,\notag
 \end{align}\endgroup
 where the last inequality   follows from    $\normfro{\nabla \varphi^t(\X)}\geq \mu_t\normfro{\X - \hat\X}$.
The conclusions are obtained by using \cref{lem:distance between Euclideanmean and IAM}.

\end{proof}

\begin{proof}[\textbf{Proof of   \cref{lem:regul_cond}}]
	(1). 	Combining \eqref{ineq:RSI first look} with \eqref{ineq:bound type 1}, we get 
	\begin{align}\label{reg:LHS}
	\inp{\X - \bar \X}{\grad \varphi^t(\X)}  
	\geq  \varphi^t(\X) \cdot(2-  \normfroinf{\X- \bar\X}^2).
	 \end{align}
	 Since $\X\in\N_{R,t}$, invoking \eqref{ineq:quadratic growth_local} in \cref{prop:quadratic growth}, we get
	\[  \inp{\X- \bar\X }{\grad  \varphi^t(\X)}\geq  (1- 4r\delta_{1,t}^2) (1- \frac{\delta_{2,t}^2}{2}) \mu_t \normfro{\X-\bar\X}^2,  \]	
	where using the conditions \eqref{delta_1_and_delta_2}  completes the proof. \\
 (2). For $\X\in\N_{l,t},$ combining \eqref{ineq:RSI first look}, \eqref{ineq:bound type 2} and \eqref{ineq:quadratic growth_local} yields
	\begin{align*}  &\quad \inp{\X- \bar\X }{\grad  \varphi^t(\X)}\\
	&\geq    [\mu_t (1- 4r\delta_{3,t}^2) -  \varphi^t(\X)  ]\normfro{\X-\bar\X}^2\\
	& \geq   { \half \mu_t} \normfro{\X-\bar\X}^2, \end{align*}
	where we used the conditions in \eqref{delta_3}. \\
\end{proof}

\begin{proof}[\textbf{Proof of \cref{prop:gradient dominant}}]
	By \eqref{ineq:RSI first look}, we get
\be\label{ineq:RSI second look}
\bad
\quad 2\varphi^t(\X) &=   \inp{\grad \varphi^t(\X)} {\X-  \bar\X}  +  \sum_{i=1}^N\inp{p_i }{q_i }\\
& \stackrel{\eqref{ineq:error bound}}{\leq} \frac{2}{\mu_t}\normfro{\grad \varphi^t(\X)}^2 +  \sum_{i=1}^N\inp{p_i }{q_i }.
\ead \ee
If $\X \in\N_{R,t},$ we use \eqref{ineq:bound type 1} to get
\[ (2-\delta_{2,t}^2)\varphi^t(\X)\leq \frac{2}{\mu_t}\normfro{\grad \varphi^t(\X)}^2.\]
If $\X \in \N_{l,t},$ we use \eqref{ineq:bound type 2} to get 
{\small\[ 2\varphi^t(\X)\leq \frac{2}{\mu_t}\normfro{\grad \varphi^t(\X)}^2 + \frac{\mu_t}{4}\normfro{\X - \bar\X}^2\stackrel{\eqref{ineq:error bound}}{\leq}\frac{3}{\mu_t}\normfro{\grad \varphi^t(\X)}^2.\]}
We conclude the proof by noting $\delta_{2,t}\leq 1/6.$
\end{proof}
\begin{proof}[\textbf{Proof of Lemma \ref{lem:bound_of_grad}}]
	First, using \eqref{property of projection onto tangent} we have 
		\be\label{rewrite}
		\grad \varphi^t_i(\X) = x_i - \sum_{j=1}^N W_{ij}x_j - \half x_i  \sum_{j=1}^N W_{ij}^t(x_{i}-x_{j})^\top (x_{i}-x_{j}) .
		\ee
	Since $\sum_{i=1}^N  \nabla \varphi^t_i(\X) = \sum_{i=1}^N(x_i - \sum_{j=1}^N W_{ij}x_j)= 0$,  we have
	\begingroup\allowdisplaybreaks
	{	
		\begin{align*}
		\quad  &\normfro{ \sum_{i=1}^{N}\grad  \varphi^t_i(\X)}=     \half    \normfro{  \sum_{i=1}^{N}  x_{i}\sum_{j=1}^N W_{ij}^t(x_{i}-x_{j})^\top (x_{i}-x_{j})}\\
		&\leq \half  \sum_{i=1}^{N} \normfro{  \sum_{j=1}^N W_{ij}^t(x_{i}-x_{j})^\top (x_{i}-x_{j}) }\\
		&\leq  \half \sum_{i=1}^{N} \sum_{j=1}^N W_{ij}^t\normfro{  x_{i}-x_{j}  }^2= 2 \varphi^t(\X)\leq L_t\normfro{\X-\bar\X}^2,
		\end{align*}  
	}\endgroup
	where the last inequality follows from  \eqref{ineq:lipschitz}. 
		Moreover, it is clear that in the embedded Euclidean space we have
	\begingroup\allowdisplaybreaks
	\begin{align}
	0&\leq \varphi^t(\X - \frac{1}{L_t}\nabla\varphi^t(\X))\notag\\
	&\stackrel{\eqref{ineq:lip_smooth_E}}{\leq} \varphi^t(\X) + \langle\nabla\varphi^t(\X),- \frac{1}{L_t}\nabla\varphi^t(\X)\rangle + \frac{1}{2L_t}\normfro{ \nabla\varphi^t(\X)}^2\notag\\
	& =  \varphi^t(\X) - \frac{1}{2L_t}\normfro{ \nabla\varphi^t(\X)}^2.\notag
	\end{align} \endgroup
	Since $\grad \varphi^t_i(\X) = \p_{\T_{x_i}\M}(\nabla \varphi^t_i(\X))$, we get
	\begin{align*}
	&\quad \normfro{\grad  \varphi^t(\X)}^2\leq  \normfro{\nabla \varphi^t(\X)}^2
	\leq 2L_t\cdot\varphi^t(\X).
	\end{align*}
	Finally, it follows from $\X\in\N_{2,t}$     that
	\begingroup\allowdisplaybreaks
	\begin{align*}
	\normfro{\grad  \varphi^t_i(\X)}\leq \normfro{  \sum_{j=1}^N W_{ij}^t (x_{j} - x_i)}
	\leq 2\delta_{2,t}  . 
	\end{align*}
	\endgroup
\end{proof}

\begin{proof}[\textbf{Proof of \cref{lem:helpful lem to prove X in N_Qt}}]
	First, we prove it for $\X\in\N_{R,t}$.	It follows from \eqref{ineq:RSI first look} and \eqref{ineq:bound type 1} that
	{
		\begin{align*} 
		\bad
		  \inp{\X-\bar\X}{\grad \varphi^t(\X)} 
		\geq  \Phi_R\cdot\varphi^t(\X) . 
		\ead\end{align*}	
	}
	Combining with \eqref{ineq:bound-of-gradh},  we get 
	$\inp{\X-\bar\X}{\grad \varphi^t(\X)} \geq \frac{\Phi_R}{2L_t}\normfro{\grad \varphi^t(\X)}^2 .$\\
	Secondly, for $\X\in \N_{l,t}$,  we have the similar arguments by combining \eqref{ineq:RSI first look} with \eqref{ineq:bound type 2}. 	
	Furthermore, if $\X\in\N_{R,t}$ or $\X\in\N_{l,t}$, we notice that \eqref{ineq: lower bound by gradient and distance} is the convex combination of \eqref{ineq: lower bound by gradient} and \eqref{ineq:reg_condition}.
\end{proof}

\begin{proof}[\textbf{Proof of \cref{lem:total deviation from 1/N}}]
	Note that $W^t$ is doubly stochastic with $\sigma_2^t$ as the second largest singular value. As $\X\in\N_2$, it follows that $\normfro{x_i-\bar x}\leq \delta_{2,t}$ for all $i\in[N]$. We then have
		\begingroup\allowdisplaybreaks
	\begin{align}
	&\quad \max_{i\in[N]} \normfro{\sum_{j=1}^N (W^t_{ij}-1/N)x_j}\notag\\
	&=  \max_{i\in[N]} \normfro{\sum_{j=1}^N (W^t_{ij}-1/N)(x_j-\bar{x}) }\notag  \\
	&\leq \max_{i\in[N]} \sum_{j=1}^N| W_{ij}^t - 1/N|\delta_{2,t}\leq \sqrt{N} \sigma_2^t\delta_{2,t},\notag 
	\end{align}\endgroup
	where the last inequality follows from  the bound on the total variation distance between any row of $W^t$ and $\frac{1}{N}\textbf{1}_N^\top$ \cite[Prop.3]{diaconis1991geometric}\cite[Sec 1.1.2]{boyd2004fastest}. The conclusion is obtained by setting $t\geq \lceil \log_{\sigma_2}(\frac{1}{2\sqrt{N}})\rceil$. 
\end{proof}

 \begin{proof}[\textbf{Proof of \cref{lem:bound_of_two_average}}]
	Let $\hat{x}=\frac{1}{N}\sum_{i=1}^Nx_i$ and  $\hat{y}=\frac{1}{N}\sum_{i=1}^Ny_i$ be the  Euclidean average points of $\X$ and $\Y$.
	Then, $\bar x$ and $\bar y$ are the (generalized) polar factor \cite{li2002perturbation} of $\hat x$ and $\hat y$, respectively. 
  We have 
	\[ 
	 \sigma_r(\hat{x}) \stackrel{\eqref{ineq:reg_condition-2-3}}{\geq}   1-2\frac{\normfro{\X-\bar\X}^2}{N} \stackrel{(i)}{\geq} 1 - 2\delta_{1,t}^2>0,\]  
	where $(i)$ follows from $\X\in\N_{1,t}$.  Similarly, we have $\sigma_r(\hat{y})\geq 1 - 2\delta_{1,t}^2$ since $\Y \in\N_{1,t}.$ \\
	 Then, it follows from \cite[Theorem 2.4]{li2002perturbation} that
	\[\normfro{\bar{y} - \bar{x}} \leq\frac{2}{ \sigma_r(\hat x) + \sigma_r(\hat y) }\normfro{\hat y - \hat x}\leq \frac{1}{1-2\delta_{1,t}^2}\normfro{\hat{x}-\hat y}. \] The proof is completed.
\end{proof}

We use \cref{lem:bound_of_two_average}     for the following lemma. 

\begin{lemma}\label{lem:bound_of_k_k+1}
	If $\X_k\in\N_{R,t}, \X_{k+1}\in\N_{1,t}$ and  $x_{i,k+1} = \Retr_{x_{i,k}}(-\alpha \grad  \varphi^t_i(\X_k)  )$, where $\delta_{1,t}$ and $\delta_{2,t}$ are given by \eqref{delta_1_and_delta_2}.     It follows that 
	\begin{align*}
	 \normfro{\bar x_k - \bar{ x}_{k+1}}
	\leq \frac{L_t}{1-2\delta_{1,t}^2}  \frac{ \alpha + 2M\alpha^2 L_t  }{N}  \normfro{\X_k - \bar\X_k}^2.\end{align*}
\end{lemma}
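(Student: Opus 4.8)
The plan is to split $\bar x_k-\bar x_{k+1}$ into a comparison with the Euclidean means $\hat x_k,\hat x_{k+1}$, which is exactly what \cref{lem:bound_of_two_average} is designed for, and then to estimate the Euclidean-mean displacement directly from the retraction update using the gradient bounds of \cref{lem:bound_of_grad}. Both $\X_k\in\N_{R,t}\subseteq\N_{1,t}$ and $\X_{k+1}\in\N_{1,t}$ hold by hypothesis, so \cref{lem:bound_of_two_average} applies to the pair $(\X_k,\X_{k+1})$ and gives
\[
\normfro{\bar x_k-\bar x_{k+1}}\le\frac{1}{1-2\delta_{1,t}^2}\,\normfro{\hat x_k-\hat x_{k+1}},
\]
where $\hat x_k=\frac1N\sum_i x_{i,k}$ and $\hat x_{k+1}=\frac1N\sum_i x_{i,k+1}$. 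It therefore suffices to show $\normfro{\hat x_k-\hat x_{k+1}}=\frac1N\normfro{\sum_{i=1}^N(x_{i,k}-x_{i,k+1})}\le\frac{L_t(\alpha+2M\alpha^2 L_t)}{N}\normfro{\X_k-\bar\X_k}^2$.

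Next I would linearize the retraction. Writing $\xi_i=-\alpha\grad\varphi^t_i(\X_k)$, the update is $x_{i,k+1}=\Retr_{x_{i,k}}(\xi_i)$, and the second-order property \eqref{ineq:ret_second-order} lets me write $x_{i,k+1}=x_{i,k}-\alpha\grad\varphi^t_i(\X_k)+e_i$ with $\normfro{e_i}\le M\normfro{\xi_i}^2=M\alpha^2\normfro{\grad\varphi^t_i(\X_k)}^2$. Summing over $i$ and applying the triangle inequality separates a \emph{linear} term from a \emph{retraction-error} term:
\[
\normfro{\sum_{i=1}^N(x_{i,k}-x_{i,k+1})}\le\alpha\,\normfro{\sum_{i=1}^N\grad\varphi^t_i(\X_k)}+M\alpha^2\sum_{i=1}^N\normfro{\grad\varphi^t_i(\X_k)}^2 .
\]

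The two sums are then controlled by \cref{lem:bound_of_grad}. For the linear term, \eqref{ineq:bound-of-sum-gradh} gives $\normfro{\sum_i\grad\varphi^t_i(\X_k)}\le L_t\normfro{\X_k-\bar\X_k}^2$. For the error term, $\sum_i\normfro{\grad\varphi^t_i(\X_k)}^2=\normfro{\grad\varphi^t(\X_k)}^2\le 2L_t\varphi^t(\X_k)$ by \eqref{ineq:bound-of-gradh}, while setting $\X=\bar\X_k,\ \Y=\X_k$ in the descent lemma \eqref{ineq:lipschitz} and using $\varphi^t(\bar\X_k)=0$, $\grad\varphi^t(\bar\X_k)=0$ yields $2\varphi^t(\X_k)\le L_t\normfro{\X_k-\bar\X_k}^2$; tracking these constants bounds the error term by $2M\alpha^2 L_t^2\normfro{\X_k-\bar\X_k}^2$. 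Substituting both estimates, dividing by $N$, and feeding the result back through the Step-1 inequality gives exactly the claimed bound.

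The main point to watch — rather than a genuine obstacle — is that the averaged Riemannian gradient $\sum_i\grad\varphi^t_i(\X_k)$ is \emph{quadratically} small in $\normfro{\X_k-\bar\X_k}$, not merely linearly. This is the content of \eqref{ineq:bound-of-sum-gradh} and rests on the cancellation $\sum_i\nabla\varphi^t_i(\X_k)=0$, which leaves only the second-order normal-space correction; it is precisely this quadratic smallness that produces $\normfro{\X_k-\bar\X_k}^2$ (rather than $\normfro{\X_k-\bar\X_k}$) on the right-hand side. One must also keep the retraction constant $M$ explicit throughout so that it surfaces in the $2M\alpha^2 L_t$ factor, and invoke the hypothesis $\X_{k+1}\in\N_{1,t}$ only to license the use of \cref{lem:bound_of_two_average}. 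Everything else is a routine chaining of the earlier lemmas.
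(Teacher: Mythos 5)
Your proposal is correct and follows essentially the same route as the paper: reduce to the Euclidean means via \cref{lem:bound_of_two_average}, split $\hat x_k-\hat x_{k+1}$ by the triangle inequality into the averaged-gradient term and the second-order retraction error, then invoke \eqref{ineq:bound-of-sum-gradh} for the former and \eqref{ineq:ret_second-order} together with \eqref{ineq:bound-of-gradh} and the descent lemma at $\bar\X_k$ for the latter. The constants chain through exactly as you describe, so there is nothing to add.
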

\begin{proof}


 From \cref{lem:nonexpansive_bound_retraction} and \cref{lem:bound_of_grad}, we   have 
\begin{align*}
 & \quad \normfro{\hat x_k - \hat x_{k+1}}\\
 &\leq  \normfro{\hat x_k- \frac{\alpha}{N}\sum_{i=1}^{N}  \grad  \varphi^t_i(\X_k)    -\hat x_{k+1} }    + \normfro{ \frac{\alpha}{N}\sum_{i=1}^{N}  \grad \varphi^t_i(\X_k)  }  \\
&\stackrel{\eqref{ineq:ret_second-order}}{\leq } \frac{M}{N} \sum_{i=1}^{N} \normfro{\alpha  \grad  \varphi^t_i(\X_k)   }^2 + \alpha \normfro{ \frac{1}{N}\sum_{i=1}^{N}  \grad  \varphi^t_i(\X_k) } \\
&\stackrel{\eqref{ineq:bound-of-sum-gradh}}{\leq }   \frac{ 2L_t^2M\alpha^2 + L_t\alpha }{N}  \normfro{\X_k - \bar\X_k}^2.
\end{align*}
Therefore, it follows from \cref{lem:bound_of_two_average} that
\begin{align*}
\normfro{\bar x_k - \bar{ x}_{k+1}} \leq \frac{1}{1-2\delta_{1,t}^2}\cdot \normfro{\hat x_k - \hat x_{k+1}} 
&\leq  \frac{L_t}{1-2\delta_{1,t}^2} \frac{\alpha + 2M\alpha^2 L_t  }{N}  \normfro{\X_k - \bar\X_k}^2 .
\end{align*}
\end{proof}

\begin{proof}[\textbf{Proof of Lemma \ref{lem:stayinbox}}]
	 First, we verify that $\X_{k+1}\in\N_{1,t}$.  Since $\X_k\in\N_{R,t}$, it follows from \cref{lem:regul_cond} that
   \begingroup\allowdisplaybreaks
   {
  	\be\label{stay in N2}
	\begin{aligned}
	&\quad\normfro{\X_{k+1} - \bar\X_{k+1}}^2\leq \normfro{\X_{k+1} - \bar\X_{k}}^2 \\
	&\leq \sum_{i=1}^N \normfro{ x_{i,k} - \alpha \grad \varphi^t_i(\X_k)  - \bar x_k }^2 \\
	&= \normfro{\X_{k} - \bar\X_k}^2 - 2\alpha\inp{\grad  \varphi^t(\X_{k})}{  \X_k-\bar \X_k} 
	 +   \normfro{\alpha\grad  \varphi^t(\X_k)}^2\\
	&\stackrel{\eqref{ineq:  lower bound by gradient and distance}}{\leq }  \left(1- 2\alpha(1-\nu) \gamma_{R,t}\right) \normfro{\X_{k} - \bar\X_k}^2   + \left(\alpha^2 - \frac{\alpha\nu\Phi }{L_t}\right) \normfro{\grad  \varphi^t(\X_k)}^2,
	\end{aligned}
	\ee
}\endgroup
for any $\nu\in [0,1]$, where the last inequality holds by noting $\Phi \geq 1$ for $\X\in\N_{R,t}$. 
By letting $\nu = 1$ and $\alpha \leq \frac{\Phi}{L_t}$, we get
	\be\label{ineq:stay_in N1} 	\normfro{\X_{k+1} - \bar\X_{k+1}}^2\leq   \normfro{\X_k-\bar\X_k}^2. \ee
and thus $\X_{k+1}\in\N_{1,t}$. \\
	Next, let us verify $\X_{k+1}\in\N_{2,t}$.  
	For each $i\in[N]$, one has 
	{\small
	\begingroup\allowdisplaybreaks
	\begin{align}
	& \normfro{x_{i,k+1} - \bar{x}_{k }}  \notag \\
	\stackrel{\eqref{ineq:ret_nonexpansive}}{\leq} & \normfro{x_{i,k} - \alpha \grad  \varphi^t_i(\X_k)   -\bar{x}_{k } }\notag\\
	\stackrel{\eqref{rewrite}}{=} &  \normfro{ (1-\alpha)(x_{i,k} - \bar{x}_k) + \alpha (\hat{x}_k- \bar{x}_{k })+\alpha\sum_{j=1}^N{W}_{ij}^t (x_{j,k} - \hat{x}_k) 
	+ \frac{\alpha}{2}x_{i,k}\sum_{j=1}^N W_{ij}^t(x_{i,k}-x_{j,k})^\top(x_{i,k}-x_{j,k})   }\notag\\
	\leq &(1-\alpha)\delta_{2,t} + \alpha \normfro{   \hat x_{k} - \bar{x}_k }
	 +\alpha \normfro{ \sum_{j=1}^N ({W}_{ij}^t -\frac{1}{N}) x_{j,k}  } 
	+   \frac{1}{2}\normfro{ {\alpha} \sum_{j=1}^N W_{ij}^t(x_{i,k}-x_{j,k})^\top(x_{i,k}-x_{j,k}) }\notag\\ 
	\stackrel{\eqref{key}}{\leq }& (1-\alpha)\delta_{2,t} + 2\alpha   \delta_{1,t}^2\sqrt{r}    +\alpha \normfro{ \sum_{j=1}^N ({W}_{ij}^t -\frac{1}{N}) x_{j,k} }   +2{\alpha}\delta_{2,t}^2 \notag\\
	\stackrel{\eqref{lem:bound_of_2_infty}}{\leq } &(1-\frac{\alpha}{2})\delta_{2,t} + 2 \alpha   \delta_{1,t}^2 \sqrt{r}  + 2{\alpha}\delta_{2,t}^2.\notag
	\end{align}\endgroup
}

	%
	Since $\alpha\geq 0$, by invoking \cref{lem:bound_of_k_k+1} we get
	{\small
	\begin{align*}
	 \normfro{\bar x_k - \bar{ x}_{k+1}} \leq L_t \cdot\frac{2 M\alpha^2L_t +  \alpha }{N(1-2\delta_{1,t}^2)}  \normfro{\X_k - \bar\X_k}^2  
\leq  \frac{  10\alpha \delta_{1,t}^2}{1-2\delta_{1,t}^2}, 
	\end{align*}
}
	where the last inequality follows  from $\alpha \leq   \frac{1}{ M} $ and $L_t\leq 2$. Therefore, using the conditions on $\delta_{1,t}$ and $\delta_{2,t}$ in \eqref{delta_1_and_delta_2} gives
		\begingroup\allowdisplaybreaks
	\begin{align} &  \normfro{x_{i,k+1} - \bar{x}_{k+1 }}
	\leq \normfro{x_{i,k+1} - \bar{x}_{k }} + \normfro{\bar{x}_{k} - \bar{x}_{k+1}} \notag \\
	\leq  &(1-\frac{\alpha}{2})\delta_{2,t} + 2\alpha   \delta_{1,t}^2 \sqrt{r}  + 2{\alpha}\delta_{2,t}^2 + \frac{ 10}{1-2\delta_{1,t}^2}\alpha\delta_{1,t}^2
	\leq \delta_{2,t}.\notag
	\end{align}\endgroup
	The proof is completed. 
\end{proof}

\begin{proof}[\textbf{Proof of \cref{thm:linear_rate_consensus}}]

 (1). Since $ 0< \alpha  \leq \min\{1, \frac{\Phi}{L_t}, \frac{1}{M}\}$. 	By \cref{lem:stayinbox}, we have $\X_k\in \N_{R,t}$ for all $k\geq 0$.  
By choosing any $\nu \in (0,1)$ and $\alpha \leq \frac{\nu\Phi}{ L_t}$, we get from \eqref{stay in N2} that
	\be\label{stay in N2 1} 	\normfro{\X_{k+1} - \bar\X_{k+1}}^2\leq  ( 1-    2\alpha(1-\nu)  \gamma_{R,t} )\normfro{\X_k-\bar\X_k}^2. \ee
	We know that $\X_k$ converges to the optimal set $\cX^*$ Q-linearly. 
	Furthermore, if $\alpha \leq \frac{2}{2MG+L_t}$, it follows from \cref{lem:sublinear-convergence} that the limit point of $\X_k$ is unique. Hence, $\bar\X_k$ also converges to a single point. \\
	(2).   If $\X_k\in\N_{l,t}$, we have the constant $\Phi = 2 - \half\normfro{\X-\bar\X}^2 > 1$ in \cref{lem:helpful lem to prove X in N_Qt}. So,  $\alpha \leq \frac{1}{L_t+2MG}\leq \frac{\Phi}{L_t}$, we have $\X_{k+1}\in\N_{l,t}$ by using the sufficient decrease inequality \eqref{ineq:suff-decrease}. The remaining proof follows the same argument of (1). 
\end{proof}


\bibliographystyle{ieeetr}
\bibliography{manifold}

\section{Conclusion}
In this paper, we provided the global and local convergence analysis of DRCS, a distributed method for consensus on the Stiefel manifold. We showed that the convergence rate asymptotically matches the Euclidean counterpart, which scales with the second largest singular value of the communication matrix. The main technical contribution is to generalize the Euclidean restricted secant inequality to the Riemannian version. In the future work, we would like to study the preservation of iteration in the region $\N_{2,t}$ without multi-step consensus and to estimate the constant $C_{\M,\varphi^t}$ for stepsize.  

\section*{APPENDIX}\label{sec:appendix}


\bibliographystyle{ieeetr}
\bibliography{manifold}

\begin{thebibliography}{10}

\bibitem{markdahl2017almost}
J.~Markdahl, J.~Thunberg, and J.~Goncalves, ``Almost global consensus on the $
  n $-sphere,'' {\em IEEE Transactions on Automatic Control}, vol.~63, no.~6,
  pp.~1664--1675, 2017.

\bibitem{markdahl2020high}
J.~Markdahl, J.~Thunberg, and J.~Goncalves, ``High-dimensional kuramoto models
  on stiefel manifolds synchronize complex networks almost globally,'' {\em
  Automatica}, vol.~113, p.~108736, 2020.

\bibitem{markdahl2018geometric}
J.~Markdahl, ``A geometric obstruction to almost global synchronization on
  riemannian manifolds,'' {\em arXiv preprint arXiv:1808.00862}, 2018.

\bibitem{paley2009stabilization}
D.~A. Paley, ``Stabilization of collective motion on a sphere,'' {\em
  Automatica}, vol.~45, no.~1, pp.~212--216, 2009.

\bibitem{al2018gradient}
S.~Al-Abri, W.~Wu, and F.~Zhang, ``A gradient-free three-dimensional source
  seeking strategy with robustness analysis,'' {\em IEEE Transactions on
  Automatic Control}, vol.~64, no.~8, pp.~3439--3446, 2018.

\bibitem{lohe2010quantum}
M.~Lohe, ``Quantum synchronization over quantum networks,'' {\em Journal of
  Physics A: Mathematical and Theoretical}, vol.~43, no.~46, p.~465301, 2010.

\bibitem{sarlette2009synchronization}
A.~Sarlette and R.~Sepulchre, ``Synchronization on the circle,'' {\em arXiv
  preprint arXiv:0901.2408}, 2009.

\bibitem{afsari2011riemannian}
B.~Afsari, ``Riemannian $l^p$ center of mass: existence, uniqueness, and
  convexity,'' {\em Proceedings of the American Mathematical Society},
  vol.~139, no.~2, pp.~655--673, 2011.

\bibitem{Absil2009}
P.-A. Absil, R.~Mahony, and R.~Sepulchre, {\em Optimization algorithms on
  matrix manifolds}.
\newblock Princeton University Press, 2009.

\bibitem{boumal2019global}
N.~Boumal, P.-A. Absil, and C.~Cartis, ``Global rates of convergence for
  nonconvex optimization on manifolds,'' {\em IMA Journal of Numerical
  Analysis}, vol.~39, no.~1, pp.~1--33, 2019.

\bibitem{rentmeesters2013algorithms}
Q.~Rentmeesters {\em et~al.}, {\em Algorithms for data fitting on some common
  homogeneous spaces}.
\newblock PhD thesis, Ph. D. thesis, Universit{\'e} Catholique de Louvain,
  Louvain, Belgium, 2013.

\bibitem{zimmermann2017matrix}
R.~Zimmermann, ``A matrix-algebraic algorithm for the riemannian logarithm on
  the stiefel manifold under the canonical metric,'' {\em SIAM Journal on
  Matrix Analysis and Applications}, vol.~38, no.~2, pp.~322--342, 2017.

\bibitem{sarlette2009consensus}
A.~Sarlette and R.~Sepulchre, ``Consensus optimization on manifolds,'' {\em
  SIAM Journal on Control and Optimization}, vol.~48, no.~1, pp.~56--76, 2009.

\bibitem{zhang2013gradient}
H.~Zhang and W.~Yin, ``Gradient methods for convex minimization: better rates
  under weaker conditions,'' {\em arXiv preprint arXiv:1303.4645}, 2013.

\bibitem{tron2012riemannian}
R.~Tron, B.~Afsari, and R.~Vidal, ``Riemannian consensus for manifolds with
  bounded curvature,'' {\em IEEE Transactions on Automatic Control}, vol.~58,
  no.~4, pp.~921--934, 2012.

\bibitem{bonnabel2013stochastic}
S.~Bonnabel, ``Stochastic gradient descent on riemannian manifolds,'' {\em IEEE
  Transactions on Automatic Control}, vol.~58, no.~9, pp.~2217--2229, 2013.

\bibitem{boyd2006randomized}
S.~Boyd, A.~Ghosh, B.~Prabhakar, and D.~Shah, ``Randomized gossip algorithms,''
  {\em IEEE transactions on information theory}, vol.~52, no.~6,
  pp.~2508--2530, 2006.

\bibitem{sepulchre2011consensus}
R.~Sepulchre, ``Consensus on nonlinear spaces,'' {\em Annual reviews in
  control}, vol.~35, no.~1, pp.~56--64, 2011.

\bibitem{sarlette2008global}
A.~Sarlette, S.~E. Tuna, V.~D. Blondel, and R.~Sepulchre, ``Global
  synchronization on the circle,'' {\em IFAC Proceedings Volumes}, vol.~41,
  no.~2, pp.~9045--9050, 2008.

\bibitem{lee2017first}
J.~D. Lee, I.~Panageas, G.~Piliouras, M.~Simchowitz, M.~I. Jordan, and
  B.~Recht, ``First-order methods almost always avoid strict saddle points,''
  {\em Math. Program.}, vol.~176, p.~311–337, 2019.

\bibitem{markdahl2019synchronization}
J.~Markdahl, ``Synchronization on riemannian manifolds: Multiply connected
  implies multistable,'' {\em arXiv preprint arXiv:1906.07452}, 2019.

\bibitem{lageman2016consensus}
C.~Lageman and Z.~Sun, ``Consensus on spheres: Convergence analysis and
  perturbation theory,'' in {\em 2016 IEEE 55th Conference on Decision and
  Control (CDC)}, pp.~19--24, IEEE, 2016.

\bibitem{ma2019implicit}
C.~Ma, K.~Wang, Y.~Chi, and Y.~Chen, ``Implicit regularization in nonconvex
  statistical estimation: Gradient descent converges linearly for phase
  retrieval, matrix completion, and blind deconvolution,'' {\em Foundations of
  Computational Mathematics}, 2019.

\bibitem{Boumal-phase-synchronization-2016}
N.~Boumal, ``Nonconvex phase synchronization,'' {\em SIAM Journal on
  Optimization}, vol.~26, no.~4, pp.~2355--2377, 2016.

\bibitem{edelman1998geometry}
A.~Edelman, T.~A. Arias, and S.~T. Smith, ``The geometry of algorithms with
  orthogonality constraints,'' {\em SIAM journal on Matrix Analysis and
  Applications}, vol.~20, no.~2, pp.~303--353, 1998.

\bibitem{Abrudan2008}
T.~E. Abrudan, J.~Eriksson, and V.~Koivunen, ``Steepest descent algorithms for
  optimization under unitary matrix constraint,'' {\em IEEE Transactions on
  Signal Processing}, vol.~56, no.~3, pp.~1134--1147, 2008.

\bibitem{Liu-So-Wu-2018}
H.~Liu, A.~M.-C. So, and W.~Wu, ``Quadratic optimization with orthogonality
  constraint: Explicit {{\L}}ojasiewicz exponent and linear convergence of
  retraction-based line-search and stochastic variance-reduced gradient
  methods,'' {\em Mathematical Programming Series A}, vol.~178, no.~1-2,
  pp.~215--262, 2019.

\bibitem{chen2020proximal}
S.~Chen, S.~Ma, A.~Man-Cho~So, and T.~Zhang, ``Proximal gradient method for
  nonsmooth optimization over the stiefel manifold,'' {\em SIAM Journal on
  Optimization}, vol.~30, no.~1, pp.~210--239, 2020.

\bibitem{li2019nonsmooth}
X.~Li, S.~Chen, Z.~Deng, Q.~Qu, Z.~Zhu, and A.~M.~C. So, ``Nonsmooth
  optimization over stiefel manifold: Riemannian subgradient methods,'' {\em
  arXiv preprint arXiv:1911.05047}, 2019.

\bibitem{absil2013extrinsic}
P.-A. Absil, R.~Mahony, and J.~Trumpf, ``An extrinsic look at the riemannian
  hessian,'' in {\em International Conference on Geometric Science of
  Information}, pp.~361--368, Springer, 2013.

\bibitem{Yang-manifold-optimality-2014}
W.~H. Yang, L.-H. Zhang, and R.~Song, ``Optimality conditions for the nonlinear
  programming problems on {R}iemannian manifolds,'' {\em Pacific J.
  Optimization}, vol.~10, no.~2, pp.~415--434, 2014.

\bibitem{berahas2018balancing}
A.~S. Berahas, R.~Bollapragada, N.~S. Keskar, and E.~Wei, ``Balancing
  communication and computation in distributed optimization,'' {\em IEEE
  Transactions on Automatic Control}, vol.~64, no.~8, pp.~3141--3155, 2018.

\bibitem{Tsitsiklisthesis}
J.~Tsitsiklis, {\em Problems in decentralized decision making and computation}.
\newblock PhD thesis, MIT, 1984.

\bibitem{nedic2010constrained}
A.~Nedic, A.~Ozdaglar, and P.~A. Parrilo, ``Constrained consensus and
  optimization in multi-agent networks,'' {\em IEEE Transactions on Automatic
  Control}, vol.~55, no.~4, pp.~922--938, 2010.

\bibitem{nedic2018network}
A.~Nedi{\'c}, A.~Olshevsky, and M.~G. Rabbat, ``Network topology and
  communication-computation tradeoffs in decentralized optimization,'' {\em
  Proceedings of the IEEE}, vol.~106, no.~5, pp.~953--976, 2018.

\bibitem{nesterov2013introductory}
Y.~Nesterov, {\em Introductory lectures on convex optimization: A basic
  course}, vol.~87.
\newblock Springer Science \& Business Media, 2013.

\bibitem{moakher2002means}
M.~Moakher, ``Means and averaging in the group of rotations,'' {\em SIAM
  journal on matrix analysis and applications}, vol.~24, no.~1, pp.~1--16,
  2002.

\bibitem{afsari2013convergence}
B.~Afsari, R.~Tron, and R.~Vidal, ``On the convergence of gradient descent for
  finding the riemannian center of mass,'' {\em SIAM Journal on Control and
  Optimization}, vol.~51, no.~3, pp.~2230--2260, 2013.

\bibitem{grove1973conjugatec}
K.~Grove and H.~Karcher, ``How to conjugatec 1-close group actions,'' {\em
  Mathematische Zeitschrift}, vol.~132, no.~1, pp.~11--20, 1973.

\bibitem{karcher1977riemannian}
H.~Karcher, ``Riemannian center of mass and mollifier smoothing,'' {\em
  Communications on pure and applied mathematics}, vol.~30, no.~5,
  pp.~509--541, 1977.

\bibitem{Wen-Yin-2013}
Z.~Wen and W.~Yin, ``A feasible method for optimization with orthogonality
  constraints,'' {\em Mathematical Programming}, vol.~142, no.~1-2,
  pp.~397--434, 2013.

\bibitem{zhang2016first}
H.~Zhang and S.~Sra, ``First-order methods for geodesically convex
  optimization,'' in {\em Conference on Learning Theory}, pp.~1617--1638, 2016.

\bibitem{absil2005convergence}
P.-A. Absil, R.~Mahony, and B.~Andrews, ``Convergence of the iterates of
  descent methods for analytic cost functions,'' {\em SIAM Journal on
  Optimization}, vol.~16, no.~2, pp.~531--547, 2005.

\bibitem{schneider2015convergence}
R.~Schneider and A.~Uschmajew, ``Convergence results for projected line-search
  methods on varieties of low-rank matrices via {\l}ojasiewicz inequality,''
  {\em SIAM Journal on Optimization}, vol.~25, no.~1, pp.~622--646, 2015.

\bibitem{karimi2016linear}
H.~Karimi, J.~Nutini, and M.~Schmidt, ``Linear convergence of gradient and
  proximal-gradient methods under the polyak-{\l}ojasiewicz condition,'' in
  {\em Joint European Conference on Machine Learning and Knowledge Discovery in
  Databases}, pp.~795--811, Springer, 2016.

\bibitem{luo1993error}
Z.-Q. Luo and P.~Tseng, ``Error bounds and convergence analysis of feasible
  descent methods: a general approach,'' {\em Annals of Operations Research},
  vol.~46, no.~1, pp.~157--178, 1993.

\bibitem{drusvyatskiy2018error}
D.~Drusvyatskiy and A.~S. Lewis, ``Error bounds, quadratic growth, and linear
  convergence of proximal methods,'' {\em Mathematics of Operations Research},
  vol.~43, no.~3, pp.~919--948, 2018.

\bibitem{yue2019quadratic}
M.-C. Yue, Z.~Zhou, and A.~Man-Cho~So, ``On the quadratic convergence of the
  cubic regularization method under a local error bound condition,'' {\em SIAM
  Journal on Optimization}, vol.~29, no.~1, pp.~904--932, 2019.

\bibitem{Liu-generalized-power-phase-synchronization-2017}
H.~Liu, M.-C. Yue, and A.~M.-C. So, ``On the estimation performance and
  convergence rate of the generalized power method for phase synchronization,''
  {\em SIAM J. Optim.}, vol.~27, no.~4, pp.~2426--2446, 2017.

\bibitem{chen2019first}
S.~Chen, {\em First-Order Algorithms for Structured Optimization: Convergence,
  Complexity and Applications}.
\newblock PhD thesis, The Chinese University of Hong Kong (Hong Kong), 2019.

\bibitem{zhu2019linearly}
Z.~Zhu, T.~Ding, D.~Robinson, M.~Tsakiris, and R.~Vidal, ``A linearly
  convergent method for non-smooth non-convex optimization on the grassmannian
  with applications to robust subspace and dictionary learning,'' in {\em
  Advances in Neural Information Processing Systems}, pp.~9442--9452, 2019.

\bibitem{zhong2018near}
Y.~Zhong and N.~Boumal, ``Near-optimal bounds for phase synchronization,'' {\em
  SIAM Journal on Optimization}, vol.~28, no.~2, pp.~989--1016, 2018.

\bibitem{nedic2017achieving}
A.~Nedic, A.~Olshevsky, and W.~Shi, ``Achieving geometric convergence for
  distributed optimization over time-varying graphs,'' {\em SIAM Journal on
  Optimization}, vol.~27, no.~4, pp.~2597--2633, 2017.

\bibitem{li2002perturbation}
W.~Li and W.~Sun, ``Perturbation bounds of unitary and subunitary polar
  factors,'' {\em SIAM journal on matrix analysis and applications}, vol.~23,
  no.~4, pp.~1183--1193, 2002.

\bibitem{diaconis1991geometric}
P.~Diaconis and D.~Stroock, ``Geometric bounds for eigenvalues of markov
  chains,'' {\em The Annals of Applied Probability}, pp.~36--61, 1991.

\bibitem{boyd2004fastest}
S.~Boyd, P.~Diaconis, and L.~Xiao, ``Fastest mixing markov chain on a graph,''
  {\em SIAM review}, vol.~46, no.~4, pp.~667--689, 2004.

\end{thebibliography}

\end{document}